\DeclareFontFamily{OT1}{pzc}{}
\DeclareFontShape{OT1}{pzc}{m}{it}{<-> s * [1.10] pzcmi7t}{}
\DeclareMathAlphabet{\mathpzc}{OT1}{pzc}{m}{it}
\begin{document}
\begin{frontmatter}
\title{Non-dissipative space-time $hp$-discontinuous Galerkin method for the
time-dependent Maxwell equations }
\author[gsc,temf]{M.~Lilienthal\corref{cor1}}
\ead{lilienthal@gsc.tu-darmstadt.de}
\author[ifh]{S.M.~Schnepp}
\ead{sascha.schnepp@ifh.ee.ethz.ch}
\author[temf]{T.~Weiland}
\ead{thomas.weiland@temf.tu-darmstadt.de}
\cortext[cor1]{Corresponding Author}

\address[gsc]{Graduate School of Computational Engineering, Technische Universitaet Darmstadt, Dolivostrasse 15, 64293 Darmstadt, Germany}
\address[ifh]{ETH Zurich, Institute of Electromagnetic Fields (IFH), Gloriastrasse 35, 8092 Zurich, Switzerland}
\address[temf]{Institut fuer Theorie Elektromagnetischer Felder, Technische Universitaet Darmstadt, Schlossgartenstrasse 8, 64289 Darmstadt, Germany}
\begin{abstract}
A finite element method for the solution of the time-dependent Maxwell equations
in mixed form is presented. The method allows for local $hp$-refinement in space
and in time.  To this end, a space-time Galerkin approach is employed. In
contrast to the space-time DG method introduced in \cite{vegt_space_2002} test and trial
space do not coincide. This allows for obtaining a non-dissipative method.
To obtain an efficient implementation, a hierarchical tensor product basis in space and time is proposed.
This allows to evaluate the local residual with a complexity of
$\mathcal{O}(p^4)$ and $\mathcal{O}(p^5)$ for affine and non-affine elements,
respectively.
\end{abstract}

\end{frontmatter}

\newcommand\restr[2]{{% we make the whole thing an ordinary symbol
\left.\kern-\nulldelimiterspace % automatically resize the bar with \right
#1 % the function
\vphantom{\big|} % pretend it's a little taller at normal size
\right|_{#2} % this is the delimiter
}}
\newtheorem{theorem}{Theorem}[section]
\newtheorem{lemma}{Lemma}[section]
\section{Introduction}
The accurate solution of large scale electromagnetic problems, where short wavelengths
need to be resolved in large computational domains, remains a challenge. Examples include 
antenna design, broadband scattering problems or electrically large structures. 
Especially for problems, where dispersion errors dominate, high order methods have 
advantages. Furthermore, if $hp$-refinement is applied in a judicious way, 
it is possible to obtain exponential convergence, even for solutions, which are 
locally non-smooth \cite{schwab_hp_1999}. This can lead to drastical savings in terms 
of degrees of freedom. In the past decade, there has been a lot of research on 
discontinuous Galerkin (DG)  methods for Maxwell's equations, see e.g. 
\cite{hesthaven_nodal_2002, fezoui_convergence_2005, cohen_spatial_2006}. The
use of discontinuous finite element spaces, allows in a natural
way for $hp$-refinement without the use of special transition elements as it is the 
case for continuous finite element methods \cite{demkowicz_computing_2006,demkowicz_computing_2007}. 
Often, DG methods are chosen for the spatial part of the discretization,
whereas time is discretized with explicit time integrators. Due to the conditional stability 
of the resulting schemes, the time step size in this case is determined by the 
smallest element and also the degree of the approximating polynomials. Thus, 
the temporal resolution is dictated by stability, not by actual accuracy requirements. 
To overcome this problem, local time-stepping schemes \cite{cohen_spatial_2006,
 piperno_symplectic_2006, taube_high_2009} and locally implicit schemes 
 \cite{dolean_locally_2010,descombes_locally_2012} have been proposed. 
 For both approaches, good speedups on locally refined meshes are reported. However,
  refinement in time is necessary for stability, rather than accuracy requirements.\\
Another approach are space-time DG methods \cite{vegt_space_2002}. Since space and time 
are discretized simultaneously, $hp$-refinement in space and time can be introduced 
naturally. The space-time DG methods are unconditionally stable. Together with their 
high flexibility, these methods are well-suited for space-time adaptivity.
However, many of the previously introduced space-time DG methods are
dissipative.
Dissipation may become an issue, especially for low approximation orders. In this paper we propose a space-time
 finite element method which is discontinuous with respect to the spatial directions and
  continuous in time. For a similar approach, regarding the temporal part of the
  discretization, see the recent contribution
  \cite{griesmaier_discretization_2013}, where a $h$-version hybrid DG method in space, combined with 
  a global continuous Galerkin approach in time is presented for
  the accoustic wave equation. However, we obtain a method, which allows for local $hp$-refinement 
  in space \emph{and} time, is energy-conserving and unconditionally stable.
In section 2 the space-time finite element method for the time-dependent Maxwell equations
\begin{align}
\label{maxwell}
&\eps \b{E}_t  - \curl \b{H} = \b{J}\;\; \mbox{in}\;\Omega\times (0,T]
\nonumber\\
&\mu \b{H}_t + \curl \b{E}=0 \;\; \mbox{in} \;\Omega \times(0,T] \nonumber \\
&\b{n} \times \b{E} = \b{n}\times\b{g}\;\;\mbox{on} \;\partial\Omega
\times(0,T] \nonumber \\
&\b{E}=\b{E}_0,\;\;\b{H}=\b{H}_0 \;\mbox{in} \;\Omega,\;t=0
\end{align}
is described, where $\eps(\b{x}), \mu(\b{x}): \Omega \rightarrow \mathbb{R}$
denote the electric permittivity and magnetic permeability, respectively.
In sections 3 and 4 we discuss the stability and energy conservation property of
the method. In section 5 we present a matrix-free implementation of the
space-time residual. It can be efficiently evaluated within an interative solution 
procedure, such that the method computationally behaves similarly to an explicit method.
 Section 6 is devoted to numerical experiments, including fully space-time 
 $hp$-adaptive simulations.

\section{Description of the method}
\subsection{Function spaces}

We will denote vector valued functions spaces with bold
letters, e.g. $\bL{2}(D):=[L_2(D)]^3$.
We introduce the spaces
\begin{align*}
&\b{H}(curl,\Omega):=\{\b{v} \in \bL{2}(\Omega):\curl\b{v} \in
\bL{2}(\Omega)\},\\
&\b{H}_0(curl,\Omega):=\{\b{v} \in \bL{2}(\Omega):\curl\b{v} \in \bL{2}(\Omega),
\b{n}\times \b{v} = 0\;\mbox{on}\,\p \Omega \}.
\end{align*}
 For $\b{J}=0, \b{g}=0$ the Maxwell system admits a unique solution
 $\b{U}=\{\b{E},\b{H}\}$ in (see e.g. \cite{remaki_methodes_1999}):
\begin{align*} 
\b{V}:=C^0([0,T],\b{H}_0(curl,\Omega))\cap C^1([0,T],\bL{2}(\Omega))
\times C^0([0,T],\b{H}(curl,\Omega)) \cap C^1([0,T],\bL{2}(\Omega)).
\end{align*}
\subsection{Partitioning of the space-time domain}
For the derivation of the method, we only consider spatial meshes consisting of
hexahedra. Nevertheless, most of the presented work is applicable to
tetrahedral meshes, as well.\\

We divide the time axis in intervals $\I_n=(t_{n-1},t_n]$, and thus obtain a partitioning of the space-time cylinder $\I\times\Omega$ in time slabs  $\I_n\times\Omega$.
For each time slab, the spatial domain $\Omega$ is partitioned into
non-overlapping (hexahedral) elements $K$ resulting in a triangulation $\mathcal{T}_n(\Omega)$.
We require that $\mathcal{T}_{n}(\Omega)$ can be obtained by refinement of a 
coarse triangulation $\overline{\mathcal{T}}(\Omega)$.
The obtained macro-elements are further bisected in temporal direction
$\I_n\times K = \bigcup_{k=1}^{N_K} I_k^K \times K$, such that we obtain 
a partition of the time slab $\I_n\times\Omega$ in space-time elements $I_k^K \times K \in \mathcal{S}_n(\I_n\times\Omega)$. Here $\mathcal{S}_n(\I_n\times\Omega)$ denotes the resulting triangulation of the time slab.
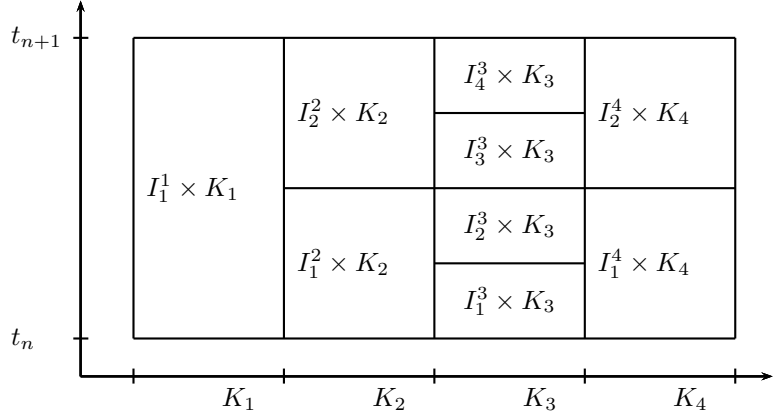
\begin{figure}[h!]

\begin{center}

\psset{unit=1cm}
\begin{pspicture}(-1.8,-1)(8,5)

%      \uput[0](8.2,-1.5){$x$}
%      \uput[0](-1.5,4.5){$t$}

\psaxes[labels=none,ticks=none]{->}(-0.7,-0.5)(-0.7,-0.5)(8.5,4.5)
\psline(8,0)(8,4)
\psline(0,0)(8,0)
\psline(0,4)(8,4)
\psline(0,0)(0,4)
\uput[0](-1.8,0){$t_{n}$}
\uput[0](-1.8,4){$t_{n+1}$}

%t-ticks
\psline(-0.8,0)(-0.6,0)
\psline(-0.8,4)(-0.6,4)

\psline(6,0)(6,4)
\psline(4,0)(4,4)
\psline(2,0)(2,4)

\uput[0](1,-0.8){$K_1$}
\uput[0](3,-0.8){$K_2$}
\uput[0](5,-0.8){$K_3$}
\uput[0](7,-0.8){$K_4$}

\psline(6,2)(8,2)
\psline(4,3)(6,3)
\psline(4,2)(6,2)
\psline(4,1)(6,1)
\psline(2,2)(4,2)

%x-ticks
\psline(0,-0.6)(0.,-0.4)
\psline(2,-0.6)(2.,-0.4)
\psline(4,-0.6)(4.,-0.4)
\psline(6,-0.6)(6.,-0.4)
\psline(8,-0.6)(8.,-0.4)
\uput[0](6,1){$I_1^4 \times K_4$}
\uput[0](6,3){$I_2^4 \times K_4$}

\rput{0}(5,0.5){$I_1^3 \times K_3$}
\rput{0}(5,1.5){$I_2^3 \times K_3$}
\rput{0}(5,2.5){$I_3^3 \times K_3$}
\rput{0}(5,3.5){$I_4^3 \times K_3$}

\uput[0](2,1){$I_1^2 \times K_2$}

\uput[0](2,3){$I_2^2 \times K_2$}

\uput[0](0,2){$I_1^1 \times K_1$}

\end{pspicture}
\vspace{4mm}
\caption{Space-time partitioning of a time-slab}

\end{center}
\end{figure}

\FloatBarrier
\subsection{Discrete spaces}
Let $F_K$ denote the mapping from the unit cube $\hat{K}=[0,1]^3$ with axes 
$\hat{x},\hat{y},\hat{z}$ to the physical element $K=F_K(\hat{K})$. Furthermore 
by $DF_K$ we denote the jacobian matrix of $F_K$. The electric and magnetic fields 
are transformed with the covariant transformation $\b{v}(x,t)=DF_K^{-T} \hat{\b{v}}(\hat{x},t)oF_K^{-1}$,
 as proposed in \cite{cohen_spatial_2006}.\\
By $P_{p_t}(I)$ we denote polynomials of degree $p_t$ on interval $I$ and by
$Q_{p_x,p_y,p_z}(\hat{K})$ tensor product polynomials of degrees $p_x, p_y$ and $p_z$ in the $\hat{x},\hat{y},\hat{z}$ directions.\\
Now we can introduce the following local discrete spaces
\begin{align}
\label{eq:spaces:local}
V^k_{h,\hat{K}}:= P_{p_t}(I_k^K)\otimes\left[Q_{p_x,p_y,p_z}(\hat{K})\right]^3\\
W^k_{h,\hat{K}}:= P_{p_t-1}(I_k^K)\otimes\left[Q_{p_x,p_y,p_z}(\hat{K})\right]^3.
\end{align}
$\b{p}_k^K=(p_t,p_x,p_y,p_z)$ denotes the local polynomial degree vector assigned to 
the space-time elements $I_k^K\times\hat{K}, k=1,\ldots N_K$. Note that the
spatial part of the polynomial spaces in \eqref{eq:spaces:local} is identical for 
all elements $I_k^K\times\hat{K}$ in one macro-element 
$\I_n\times\hat{K}$. Thus we obtain, for each macro-element,
tensor-product polynomial test- and trial-spaces consisting of a piecewise
polynomial temporal trial- and test spaces $S_K(\I_n)$ and
$T_K(\I_n)$ and the spatial part $Q_{p_x,p_y,p_z}(\hat{K})$.
\begin{align}
\label{eq:spaces:local:patch}
&V_{h,\hat{K}}:= S_K(\I_n) \otimes
\left[Q_{p_x,p_y,p_z}(\hat{K})\right]^3\hspace{-2mm},
\;\;\;S_K(\I_n):=\{u(t) \in H^1(\I_n) : \restr{u}{I_k^K} \in
P_{p_t}(I_k^K)\}\nonumber\\
&W_{h,\hat{K}}:= T_K(\I_n)
\otimes\left[Q_{p_x,p_y,p_z}(\hat{K})\right]^3\hspace{-2mm},
\;\;\;T_K(\I_n):=\{u(t) \in L_2(\I_n) : \restr{u}{I_k^K} \in P_{p_t-1}(I_k^K)\}.
\end{align}
We collect the local polynomial degree vectors for all elements in a global vector $\b{p}$.\\
Now we can define the global spaces for one time-slab
\begin{equation}
\label{eq:spaces:vglob}
V_h(\I_n\times\Omega;\b{p}):=\{\b{v}(t,\b{x}) \in H^1(\I_n;L_2(\Omega)) :  DF^T
\restr{\b{v}}{K} o F  \in V_{h,\hat{K}} \},
\end{equation}
and
\begin{equation}
\label{eq:spaces:vglob}
W_h(\I_n\times\Omega;\b{p}):=\{\b{v}(t,\b{x}) \in L_2(\I_n;L_2(\Omega)) : DF^T \restr{\b{v}}{K} o F  \in W_{h,\hat{K}} \}.
\end{equation}

While both spaces are  spatially discontinuous, the functions in $V_h$ and $W_h$ have different continuity properties in temporal direction: functions in $V_h$ are time-continuous within each time slab whereas functions in $W_h$ are allowed to be discontinuous at the interfaces in time direction. The situation is depicted for an example with three space-time elements in Fig. \ref{fig:continuity_test_trial}.
\begin{figure}[h!]
\begin{tabular}{cc}
\includegraphics[scale=0.35]{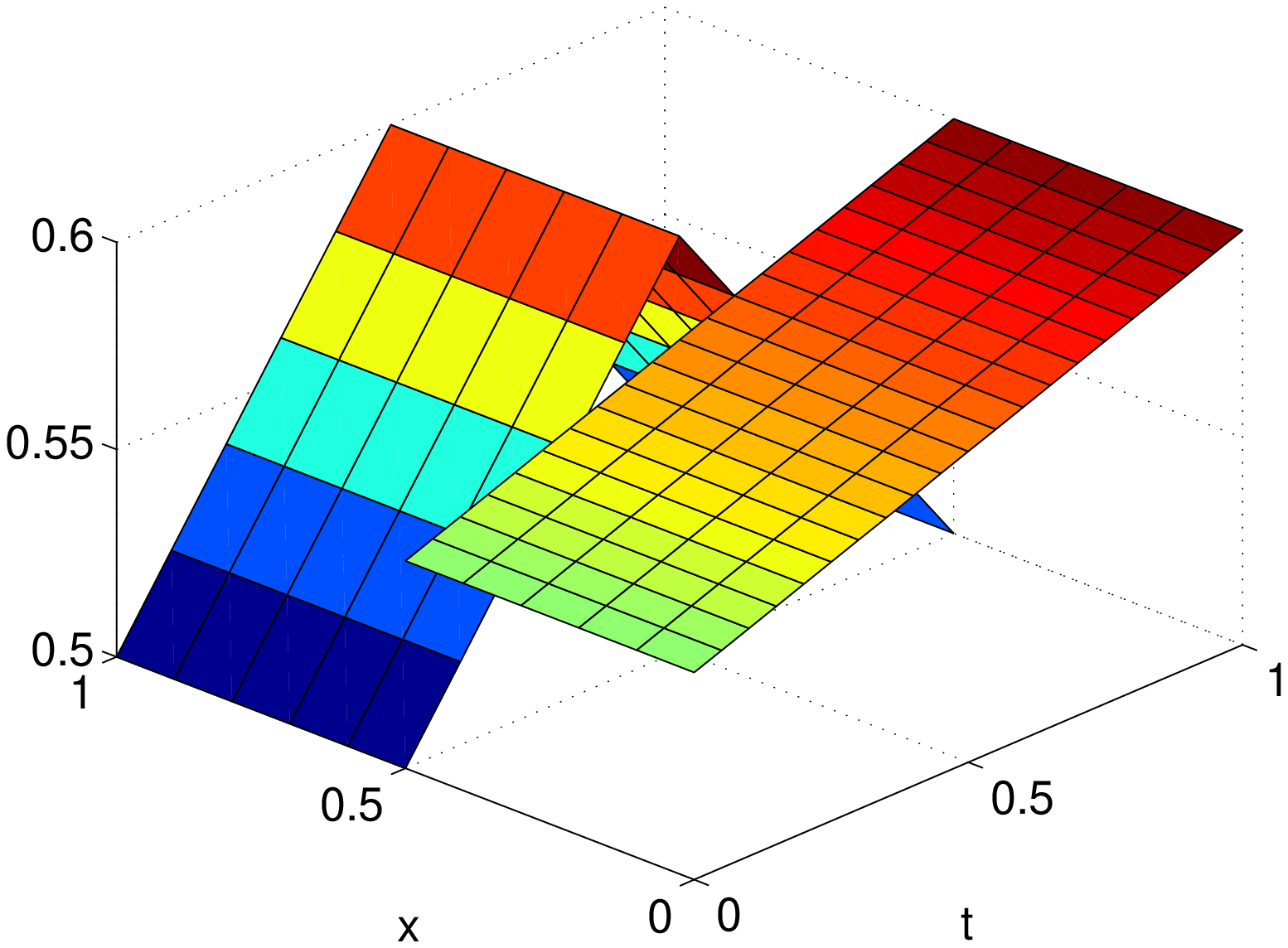} & \includegraphics[scale=0.35]{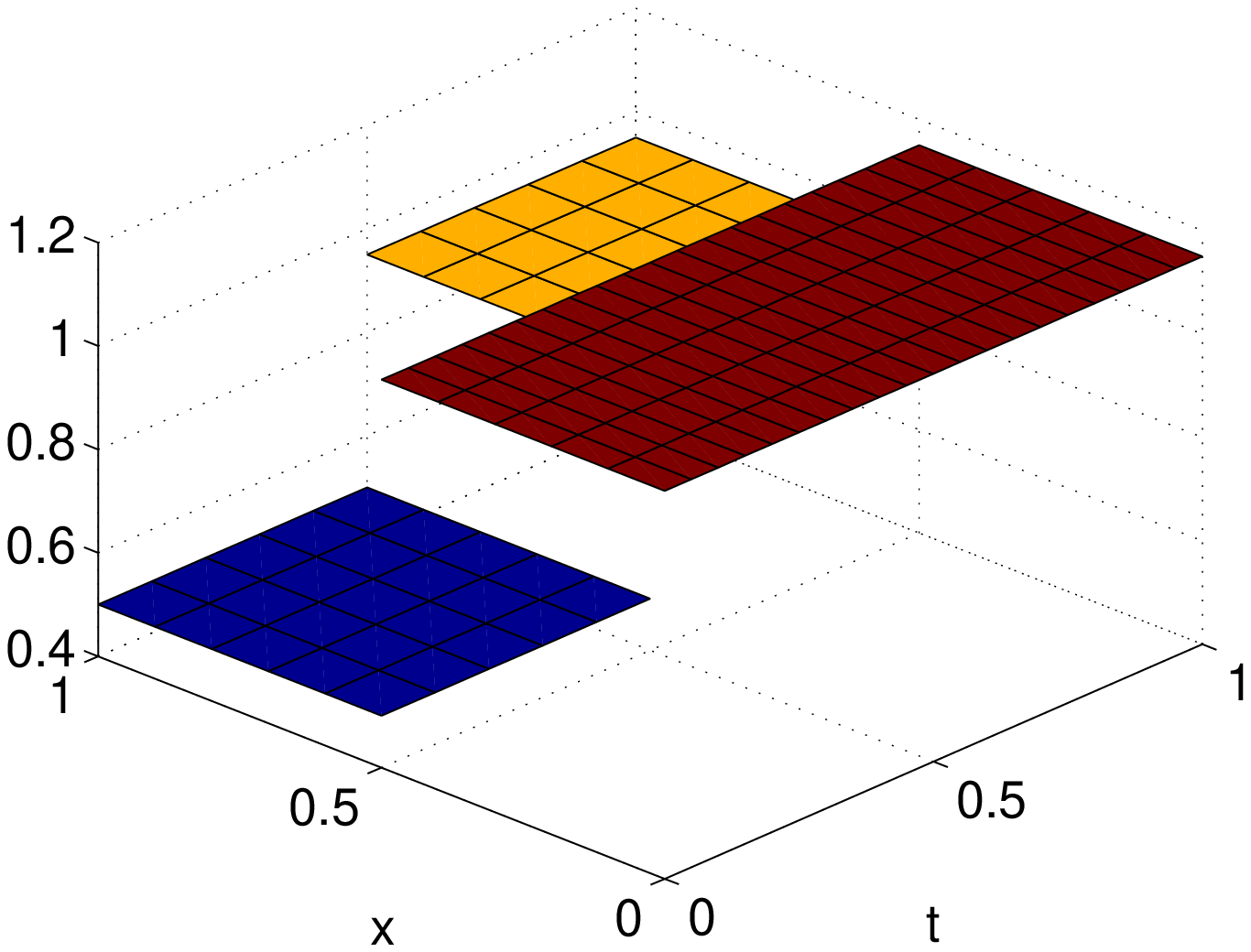}
\end{tabular}
\caption{Left: function belonging to trial space $V_h$, right: function from the corresponding test space $W_h$}\label{fig:continuity_test_trial}
\end{figure}
\FloatBarrier
\subsection{Faces and trace operators}
By $\mathcal{F}$ we denote the set of all faces in the spatial triangulation
$\mathcal{T}(\Omega)$, by $\Fi$ the set of all interior faces $f:=\p K^1 \cap \p
K^2 \;:\; K^1,\;K^2 \in \mathcal{T}(\Omega)$ and by $\mathcal{F}_{b}$ the set of all boundary faces $f:=\p K \cap \p\Omega\;:\;K\in\mathcal{T}(\Omega)$.\\
We define on interior faces $f\in\Fi$ the  average and tangential jump operators as $\{ \b{v}\}:=(\b{v}^1+\b{v}^2)/2$ and $\jumpt{\b{v}}:=\b{n}^1\times\b{v}^1 + \b{n}^2\times\b{v}^2$ respectively.
Here $\b{v}^1$ and $\b{v}^2$ denote the traces of $\b{v}$ on $f$ taken from within element $K^1$ and $K^2$ with unit normals $\b{n}^1$ and $\b{n}^2$.
For a boundary face $f\in\Fb$ we define averages and jumps as $\avg{\b{v}}:=\b{v}$ and $\jumpt{\b{v}}:=\b{n}\times\b{v}$.

In the following section we consider the discretization of a
single time-slab $\I_n \times \Omega$.
\subsection{Weak formulation}
Multiplying \eqref{maxwell} by smooth test functions $\b{v},
\b{w}$, integrating over a macro-element $\I_n \times K$ and performing
integration by parts of the terms involving the curl operator 
with respect to the spatial variables yields:
\begin{align}
& \inprodloc{\varepsilon \p_t\b{E}}{\b{v}}  - \inprodloc{\b{H}}{\curl \b{v}}\nonumber\\
& - \inprodf{\p K}{\b{n} \times \b{H}^*}{\b{v}} = \inprodloc{\b{J}}{\b{v}}\nonumber\\
& \inprodloc{\mu \p_t\b{H}}{\b{w}} + \inprodloc{\curl \b{E}}{\b{w}}\nonumber\\
& + \inprodf{\p K}{\b{n} \times (\b{E}^*-\b{E})}{\b{w}} =0.
\end{align}
Then by replacing  $\b{U}_h:=\{\b{E},\b{H}\} \in \b{V}$  by the discrete fields
$\b{E}_h,\b{H}_h \in V_h$, summing over all $\I_n \times K$ and choosing 
centered fluxes
\begin{align}
\b{n}\times\b{E}^*=\b{n}\times\avg{\b{E}_h}\;  f \in \mathcal{F}_0,
\hspace{0.2cm}
\b{n}\times\b{E}^*=\b{n}\times\b{g}\; f \in \mathcal{F}_D, \hspace{0.2cm}
\b{n}\times\b{H}^*=\b{n}\times\avg{\b{H}_h},\;  f \in \mathcal{F} \nonumber
\end{align}
one obtains 
\begin{align}\label{eq:bilinearform:C}
&C_h(\b{U_h},\b{V})=L(\b{V})\nonumber\\
&C_h(\b{U_h},\b{V}):=\inprod{\varepsilon \p_t\b{E}_h}{\b{v}} + \inprod{\mu \p_t\b{H}_h}{\b{w}}  + \nonumber\\
&- \inprod{\b{H}_h}{\curlh \b{v}} + \inprod{\curlh \b{E}_h}{\b{w}}\nonumber\\
&+ \inprodf{\Fi\cup\Fb}{\avg{ \b{H}_h}}{\jumpt{\b{v}}} -
\inprodf{\Fi\cup\Fb}{\jumpt{ \b{E}_h}}{\avg{\b{w}}}\\ &L(\b{V}):=
\ell_E(\b{v})\nonumber + \ell_H(\b{w})\nonumber\\
&\ell_E(\b{v}) = \inprod{\b{J}}{\b{v}},\;\;\; \ell_H(\b{w})=- \inprodf{\Fb}{\b{n}\times\b{g}}{\b{w}}.
\end{align}
\section{Stability - discretizations with global $hp$-refinement in time}
In this section we provide stability results in the $L_2(\Omega)$- and
$L_2(\I;\bL{2}(\Omega))$-norms.
In order to obtain the
stability bounds, we apply techniques similar to those presented in 
\cite{griesmaier_discretization_2013}.
 We restrict the analysis to
discretizations, where $T_K(\I_n)$ does not depend on the spatial element $K$,
i.e. no local refinement with respect to time is present.\\
We consider the problem
\begin{align}
\label{eq:weakform}
&\mbox{Find}\;\b{U}_h \in V_h\times V_h \quad\mbox{such that}\nonumber\\
&C_h(\b{U_h},\b{V}) =L(\b{V})\; \forall
\;\b{V}\;\in W_h \times W_h.
\end{align}
The general case, including local refinement in time, will be treated in the
next section.\\
\subsection{Stability: $L_2(\Omega)-Norm$}
Denoting the discrete electromagnetic energy by
\begin{align}
\label{stability:energydef}
\mathcal{E}(t) = \half \int_\Omega \left( \eps \b{E}_h \cdot\b{E}_h + \mu \b{H}_h \cdot \b{H}_h \right) \d{x},
\end{align}
we demonstrate stability of the method by showing, that the energy is constant up to a
contribution of the source terms. This result is a discrete version of Poyntings
theorem (see e.g. \cite{jackson_classical_1998}), which holds for the continuous
Maxwell system.
\newtheorem{stability:energy}{Theorem}[section]
\begin{stability:energy}\label{stability:energy:no_local_refinement}
Provided, the temporal polynomial degree $p_t$ is uniform for all $\I_n \times K$
and the material parameters $\eps, \mu$ are element-wise constant
\begin{align}\label{stability:energy:eq}
\mathcal{E}(t_{n+1}) - \mathcal{E}(t_{n})=\inprod{\b{J}}{\pvwE} - \inprodf{\Fb}{\b{n}\times\b{g}}{\pvwH}.
\end{align}
\end{stability:energy}
\begin{proof}
We denote by $\pvwE, \pvwH$ the $L_2$-orthogonal projections of the discrete
solution onto the test space $W_h$. Please note, that this projection reduces to
a projection with respect to time only, since the spatial parts of the
tensor-product trial- and testspaces coincide.\\
 Choosing $\b{v}=\pvwE,
\b{w}=\pvwH$ in \eqref{eq:weakform} yields
\begin{align}
\label{stability:weakform:tested:global_refinement}
&\inprod{\varepsilon \p_t\b{E}_h}{\pvwE} + \inprod{\mu \p_t\b{H}_h}{\pvwH} \nonumber\\
&- \inprod{\b{H}_h}{\curlh \pvwE} + \inprod{\curlh \b{E}_h}{\pvwH}\nonumber\\
& + \inprodf{\Fi\cup\Fb}{\avg{ \b{H}_h}}{\jumpt{\pvwE}} - \inprodf{\Fi\cup\Fb}{\jumpt{ \b{E}_h}}{\avg{\pvwH}} \nonumber\\
&= \inprod{\b{J}}{\pvwE} - \inprodf{\Fb}{\b{n}\times\b{g}}{\pvwH}
\end{align}
Since $\p_t \b{E}_h\in W_h$ and by using integration by parts, we obtain for the
first term in \eqref{stability:weakform:tested:global_refinement}
\begin{align*}
\inprod{\varepsilon \p_t\b{E}_h}{\pvwE} &= \inprod{\varepsilon \p_t\b{E}_h}{\b{E}_h}\\
&= \half \int_\Omega \eps \b{E}_h(t_{k+1}) \cdot \b{E}_h(t_{k+1}) \d{x} - \half \int_\Omega \eps \b{E}_h(t_{k}) \cdot \b{E}_h(t_{k}) \d{x}.
\end{align*}
Thus we obtain by treating the second
term in \eqref{stability:weakform:tested:global_refinement} the same way
\begin{align}
\label{eq:stability:energydifference}
\inprod{\varepsilon \p_t\b{E}_h}{\pvwE} + \inprod{\mu \p_t\b{H}_h}{\pvwH} = \mathcal{E}(t_{n+1}) - \mathcal{E}(t_{n}).
\end{align}
The third term yields
\begin{align*}
&- \inprod{\b{H}_h}{\curlh \pvwE} = - \inprod{\pvwH}{\curlh \b{E}_h}
\end{align*}
where we have used that $\projvw$ is an orthogonal projection and thus self-adjoint. Thus, the sum of the third and fourth terms is zero.\\
Now, consider the mesh-dependent terms associated with an interior face $f=\p
K^1\cap\p K^2\in\Fi$.
We have by a straightforward calculation
\begin{align}
\label{stability:fluxterms}
& + \inprodf{f}{\avg{ \b{H}_h}}{\jumpt{\pvwE}} -
\inprodf{f}{\jumpt{ \b{E}_h}}{\avg{\pvwH}} \nonumber\\
& =  \half \inprodf{f}{\b{H}_h^1}{\b{n}^1 \times \pi^1\b{E}^1} + \half
\inprodf{f}{\b{H}_h^2}{\b{n}^1 \times \pi^1\b{E}_h^1}\nonumber\\
& +  \half \inprodf{f}{\b{H}_h^1}{\b{n}^2 \times \pi^2\b{E}^2} +
\half \inprodf{f}{\b{H}_h^2}{\b{n}^2 \times \pi^2\b{E}^2}\nonumber\\
& -  \half \inprodf{f}{\pi^1\b{H}^1}{\b{n}^1 \times \b{E}_h^1} - \half
\inprodf{f}{\pi^1\b{H}^1}{\b{n}^2 \times \b{E}_h^2}\nonumber\\
& -  \half \inprodf{f}{\pi^2\b{H}^2}{\b{n}^1 \times \b{E}_h^1} -
\half \inprodf{f}{\pi^2\b{H}^2}{\b{n}^2 \times \b{E}_h^2}\\
&=T_1+T_2+T_3+T_4+T_5+T_6+T_7+T_8\nonumber,
\end{align}
where $\pi^1$ and $\pi^2$ denote the restrictions of the projection $\pi$ to local
spaces $W_{h,K^1}$ and $W_{h,K^2}$ from \eqref{eq:spaces:local:patch}.\\
First, we inspect the terms, which do not
couple to neighboring elements, for example $T_1$ and $T_5$
\eqref{stability:fluxterms}. Again, by the symmetry of $\pi^1$,
we have for the first term
\begin{align*}
T_1 = \half \inprodf{f}{\b{H}_h^1}{\b{n}^1 \times \pi^1\b{E}^1}
=   \half \inprodf{f}{\pi^1\b{H}^1}{\b{n}^1 \times \b{E}_h^1}=-T_5,
\end{align*}
such that $T_1+T_5=0$. Analogously we obtain $T_4+T_8=0$.\\
For the terms involving neighbor-coupling there holds, for example,
\begin{align*}
T_2=\half \inprodf{f}{\b{H}_h^2}{\b{n}^1 \times \pi^1\b{E}^1}= \half
\inprodf{f}{\pi^1 \b{H}_h^2}{\b{n}^1 \times \b{E}_h^1}\\
= \half \inprodf{f}{\pi^2 \b{H}_h^2}{\b{n}^1 \times \b{E}_h^1} =-T_7,
\end{align*}
thus, we obtain $T_2+T_7=0$ and similarly $T_3+T_6=0$.\\
Note, that this holds only due to $\pi^1\b{H}_h^2=\pi^2\b{H}_h^2$, which is fullfilled
since the temporal parts $T_{K^1}(\I_n)$ and $T_{K^2}(\I_n)$ of
the local test spaces $W_{h,K^1}$ and $W_{h,K^2}$ are
identical for all elements, which in turn is achieved by the restriction to
discretizations without local refinement in time.\\
Noting that terms associated with a boundary face can be treated exactly the same way
as the non-coupling terms, yields the desired result.
\end{proof}
\subsection{Stability $L_2(\I;L_2(\Omega))$-Norm}
For the special case of no local refinement in time we can also show the stability in the space-time $L_2$-norm $\|\cdot\|_{L_2(\I;\bL{2}(\Omega))}$.
First we recall the recurrence relations for the Legendre-polynomials $L_i(\xi)$
\begin{align}
\label{legendre_recurrence}
(i+1)L_{i+1}(\xi)=(2i+1)\xi L_i(\xi)-i\L_{i-1}(\xi)\nonumber\\
L_i^\prime(\xi)= 2L_{i-1}(\xi)/\|L_{i-1}\|_{L_2([-1,1])}^2 +
2L_{i-3}(\xi)/\|L_{i-3}\|_{L_2([-1,1])}^2 +\;...\nonumber\\
\xi L_{i+1}^\prime=(i+1)L_{i+1}(\xi)+iL_{i-1}(\xi) + (i-1)L_{i-1}(\xi)+(i-2)L_{i-3}(\xi)\;...
\end{align}

\begin{lemma}\label{stability:boundSTL2:lemma}
For element wise constant $\eps, \mu$ there holds
\begin{align}
\frac{1}{2\Delta t} \|\eps^\half
\b{E}_h\|_{L_2(\I_n;\bL{2}(\Omega))}^2 + \frac{1}{2\Delta t}
\|\mu^\half \b{H}_h\|_{L_2(\I_n;\bL{2}(\Omega))}^2  \leq \inprod{\eps \p_t \b{E}_h}{\projvw (\tau(t) \projvw \b{E}_h) }\nonumber\\
+ \inprod{\mu \p_t \b{H}_h}{\projvw (\tau(t) \projvw \b{H}_h) }+\mathcal{E}(t_k),\;\;\;      \tau(t)=\frac{(t_{k+1}-t)}{\Delta t}\nonumber
\end{align}
\end{lemma}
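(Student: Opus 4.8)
The plan is to rewrite the space-time $L_2$-norms on the left through a $\tau$-weighted integration by parts in time, and then to absorb the difference between the clean weight $\tau\b{E}_h$ and the admissible, projected test functions of the statement into a remainder with a favorable sign, controlled by the Legendre identities \eqref{legendre_recurrence}.

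First I would record the one-dimensional identity underlying the weight $\tau(t)=(t_{k+1}-t)/\Delta t$. Since $\tau(t_k)=1$, $\tau(t_{k+1})=0$ and $\tau^\prime=-1/\Delta t$, integration by parts in time (performed piecewise; interface contributions cancel because $\b{E}_h,\b{H}_h\in V_h$ are time-continuous and $\tau$ is continuous) gives, for any scalar $u\in S_K(\I_n)$,
\begin{align*}
\int_{\I_n} u^\prime\,\tau\,u\,\d{t} = \tfrac{1}{2\Delta t}\int_{\I_n} u^2\,\d{t} - \tfrac{1}{2}\,u(t_k)^2 .
\end{align*}
Applying this componentwise with the element-wise constant weights $\eps$ and $\mu$ and integrating over $\Omega$ yields the exact identity
\begin{align*}
\tfrac{1}{2\Delta t}\|\eps^{1/2}\b{E}_h\|_{L_2(\I_n;\bL{2}(\Omega))}^2 + \tfrac{1}{2\Delta t}\|\mu^{1/2}\b{H}_h\|_{L_2(\I_n;\bL{2}(\Omega))}^2 = \inprod{\eps\p_t\b{E}_h}{\tau\b{E}_h} + \inprod{\mu\p_t\b{H}_h}{\tau\b{H}_h} + \mathcal{E}(t_k),
\end{align*}
which already produces the $\mathcal{E}(t_k)$ term and replaces the left-hand side by inner products against the (inadmissible, temporal degree $p_t+1$) weight $\tau\b{E}_h$, $\tau\b{H}_h$.

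Next I would pass to the projected test functions appearing in the statement. Because $\p_t\b{E}_h$ has temporal degree $p_t-1$ and $\eps$ is element-wise constant, $\eps\p_t\b{E}_h\in W_h$, so the self-adjointness of the $L_2$-orthogonal projection $\projvw$ gives
\begin{align*}
\inprod{\eps\p_t\b{E}_h}{\projvw(\tau\projvw\b{E}_h)} = \inprod{\eps\p_t\b{E}_h}{\tau\projvw\b{E}_h} = \inprod{\eps\p_t\b{E}_h}{\tau\b{E}_h} - \inprod{\eps\p_t\b{E}_h}{\tau(\b{E}_h-\projvw\b{E}_h)},
\end{align*}
and analogously for the magnetic part. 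Subtracting the identity of the first step, the Lemma is seen to be equivalent to the sign estimate
\begin{align*}
\inprod{\eps\p_t\b{E}_h}{\tau(\b{E}_h-\projvw\b{E}_h)} + \inprod{\mu\p_t\b{H}_h}{\tau(\b{H}_h-\projvw\b{H}_h)} \le 0 ,
\end{align*}
i.e. the error made by replacing $\tau\b{E}_h$ by its admissible surrogate must have the right sign.

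The main work is this last estimate, and it is exactly here that the recurrences \eqref{legendre_recurrence} enter. Expanding in Legendre polynomials in time, $\b{E}_h=\sum_{i=0}^{p_t}\b{e}_i(\b{x})L_i$, the projection error is the single top mode $\b{E}_h-\projvw\b{E}_h=\b{e}_{p_t}L_{p_t}$, while on $\I_n$ the weight $\tau$ is affine with negative slope. On the reference interval the quantity $\int L_i^\prime\,\tau\,L_{p_t}$ splits into a constant part, which vanishes since $\int L_i^\prime L_{p_t}=0$ for all $i\le p_t$, and a linear part; inserting $\xi L_{p_t}=\big((p_t+1)L_{p_t+1}+p_t L_{p_t-1}\big)/(2p_t+1)$ and using $\int L_i^\prime L_{p_t+1}=0$ together with $\int L_i^\prime L_{p_t-1}=2\,\delta_{i,p_t}$ collapses everything to the single index $i=p_t$ and leaves a net factor $-\tfrac{p_t}{2p_t+1}$ (positive scaling factors cancel). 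Consequently
\begin{align*}
\inprod{\eps\p_t\b{E}_h}{\tau(\b{E}_h-\projvw\b{E}_h)} = -\tfrac{p_t}{2p_t+1}\,\|\eps^{1/2}\b{e}_{p_t}\|_{\bL{2}(\Omega)}^2 \le 0 ,
\end{align*}
and likewise for $\b{H}_h$ with top mode $\b{h}_{p_t}$, which establishes the required inequality, in fact with the explicit gap $\tfrac{p_t}{2p_t+1}\big(\|\eps^{1/2}\b{e}_{p_t}\|_{\bL{2}(\Omega)}^2+\|\mu^{1/2}\b{h}_{p_t}\|_{\bL{2}(\Omega)}^2\big)$. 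I expect the only real obstacle to be this Legendre bookkeeping, namely tracking which products survive orthogonality and confirming the negative slope of $\tau$ on each temporal sub-interval when a slab carries more than one time element; everything else is integration by parts and the self-adjointness of $\projvw$.
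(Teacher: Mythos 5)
Your proposal is correct and follows essentially the same route as the paper's own proof: the $\tau$-weighted temporal integration by parts producing the $\mathcal{E}(t_k)$ term, the reduction via self-adjointness of $\projvw$ (using $\eps\p_t\b{E}_h\in W_h$) to the sign condition $\inprod{\eps \p_t \b{E}_h}{\tau(\b{E}_h-\projvw\b{E}_h)}\leq 0$, and the Legendre expansion identifying the projection error with the top temporal mode are exactly the paper's steps. The only cosmetic difference is the bookkeeping: you apply the three-term recurrence to $\xi L_{p_t}$ and use $\int L_i^\prime L_{p_t-1}\,\d{\xi}=2\delta_{i,p_t}$, whereas the paper uses its recurrence for $\xi L_i^\prime$; both collapse the sum to $i=p_t$ and give the same non-positive term, with your version additionally exhibiting the explicit gap $\tfrac{p_t}{2p_t+1}\|\eps^{1/2}\b{e}_{p_t}\|_{\bL{2}(\Omega)}^2$.
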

\begin{proof}
We have by integration by parts with respect to the temporal variable
\begin{align*}
\inprodloc{\eps \p_t \b{E}_h}{\projvw (\tau(t)\b{E}_h)  } =
\frac{1}{2\Delta t}
\|\eps^\half\b{E}_h\|_{L_2(\I_n;\bL2(\Omega))}^2 - \half \| \eps^\half
\b{E}_h(t_k)\|_K^2
\end{align*}
using that $\projvw$ is an orthogonal projection and $\p_t \b{E}_h\in W_h$ we can rewrite
\begin{align}
\label{stability:boundSTL2:lemma:split}
\inprodloc{\eps \p_t \b{E}_h}{\projvw (\tau(t)\b{E}_h)  } =
\inprodloc{\eps \p_t \b{E}_h}{\tau(t)( \b{E}_h-\projvw\b{E}_h)}\nonumber\\
+ \inprodloc{\eps \p_t \b{E}_h}{\tau(t)\projvw\b{E}_h}.
\end{align}
We will now show that the first term on the right-hand side of \eqref{stability:boundSTL2:lemma:split} is non-positive.
Due to the space-time tensor product construction of the local finite element space, we can expand the discrete solution and the projection error as
\begin{align}
\label{stability:boundSTL2:lemma:expansion}
\b{E}_h=\sum\limits_{i=0}^{p_t} \sum\limits_{k=1}^{N_s}L_i(\xi) \bs{\varphi}_k(x,y,z) e_{ik}\nonumber\\
\b{E}_h - \projvw\b{E}_h=\sum\limits_{k=1}^{N_s}L_{p_t}(\xi) \bs{\varphi}_k(x,y,z) e_{p_t\,k}
\end{align}
with  $\xi=2(t-t_k)/\Delta t - 1$ and $N_s=3(p_x+1)(p_y+1)(p_z+1)$.
Using once more the projection property, inserting the expansions \eqref{stability:boundSTL2:lemma:expansion}  we obtain
\begin{align*}
&\inprodlocst{\eps \p_t \b{E}_h}{\tau(t)( \b{E}_h-\projvw\b{E}_h)}=-\frac{\eps}{\Delta t} \inprodlocst{\p_t \b{E}_h}{t( \b{E}_h-\projvw\b{E}_h)}\\
&= -\frac{\eps}{\Delta t}  \sum\limits_{i=1}^{p_t}  \sum\limits_{j=1}^{N_s}\sum\limits_{l=1}^{N_s} \int\limits_{-1}^1 (t_k + \Delta t (1+\xi)/2)  L_i^\prime(\xi) L_{p_t}(\xi)  \d{\xi}\nonumber\\
&\times \int_K\bs{\varphi}_j(x,y,z) \cdot \bs{\varphi}_l(x,y,z) \d{x} e_{ij} e_{p_t\,l}\\
&= -\frac{\eps}{\Delta t}  \sum\limits_{i=1}^{p_t}  \int\limits_{-1}^1 \frac{\Delta t}{2} \xi  L_i^\prime(\xi) L_{p_t+1}(\xi)  \d{\xi} \sum\limits_{j=1}^{N_s}\sum\limits_{l=1}^{N_s}\int_K\bs{\varphi}_j(x,y,z) \cdot \bs{\varphi}_l(x,y,z) \d{x} e_{ij} e_{p_t\,l}.
\end{align*}
In the last step we have used the recurrence relation for the derivatives of  Legendre polynomials.
Finally, using the third recurrence relation in \eqref{legendre_recurrence}, we obtain by the orthogonality of the Legendre polynomials
\begin{align*}
\inprodlocst{\eps \p_t \b{E}_h}{\tau(t)( \b{E}_h-\projvw\b{E}_h)}\\
= - (\eps/2) \int\limits_{-1}^{1} L_{p_t}(\xi)^2\d{\xi} \sum\limits_{j=1}^{N_s}\sum\limits_{l=1}^{N_s} \int_K\bs{\varphi}_j(x,y,z) \cdot \bs{\varphi}_l(x,y,z) \d{x} \,e_{p_t+1\,j} e_{p_t+1\,l} \leq 0
\end{align*}
\end{proof}

Denoting the dual norm on the discrete test space by
\begin{align*}
\|\ell \|_{W_h^\prime}:=\sup\limits_{\b{v}\in W_h} \frac{|\ell(\b{v})|}{\|\b{v}\|_{L_2(\I_n;\bL{2}(\Omega))}},
\end{align*}
we can show

\begin{lemma}
Provided, the temporal polynomial degree $p_t$ is constant and no local $h$-refinement with respect to time is present in the discretization, for elementwise constant $\eps \geq \epsmin > 0$ and $\mu \geq \mumin > 0$
there holds
\begin{align}\label{stability:boundSTL2lemma2:eq}
\|\eps^\half \b{E}_h\|_{L_2(\I_n;\bL{2}(\Omega))}^2 + \|\mu^\half \b{H}_h\|_{L_2(\I_n;\bL{2}(\Omega))}^2 \leq   4 \Delta t^2 \left( \epsmin^{-1} \|\ell_E\|_{W_h^\prime}^2 + \mumin^{-1} \|\ell_H\|_{W_h^\prime}^2 \right) + 4 \Delta t \mathcal{E}(t_k)
\end{align}
\end{lemma}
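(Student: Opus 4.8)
The plan is to test the weak formulation \eqref{eq:weakform} with
$\b{v}=\projvw(\tau(t)\projvw\b{E}_h)$ and $\b{w}=\projvw(\tau(t)\projvw\b{H}_h)$, which lie in $W_h$ by construction, and to combine the resulting identity with Lemma \ref{stability:boundSTL2:lemma}. Introducing the operator $\Pi:=\projvw\, M_\tau\,\projvw$, where $M_\tau$ denotes multiplication by $\tau(t)$, one has $\b{v}=\Pi\b{E}_h$ and $\b{w}=\Pi\b{H}_h$, so the two non-energy terms on the right-hand side of Lemma \ref{stability:boundSTL2:lemma} are precisely $\inprod{\eps\p_t\b{E}_h}{\b{v}}$ and $\inprod{\mu\p_t\b{H}_h}{\b{w}}$, i.e.\ the first two terms of $C_h(\b{U}_h,\b{V})$ for this choice of $\b{V}$.

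First I would verify that, with these test functions, the curl terms and the face terms of $C_h$ cancel, exactly as in the proof of Theorem \ref{stability:energy:no_local_refinement}. The key observation is that $\Pi$ inherits from $\projvw$ the three properties used there: it is self-adjoint on $L_2(\I_n;\bL{2}(\Omega))$ (being a composition of the self-adjoint maps $\projvw$ and $M_\tau$); it acts only on the temporal variable and therefore commutes with every spatial operation, namely $\curlh$, the normal products $\b{n}\times\cdot$, and the averages $\avg{\cdot}$ and jumps $\jumpt{\cdot}$; and --- crucially --- it is identical on any two elements sharing a face, so that $\Pi^1\b{H}_h^2=\Pi^2\b{H}_h^2$. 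Carrying out the manipulations of Theorem \ref{stability:energy:no_local_refinement} with $\pi$ replaced by $\Pi$ then makes the curl and face contributions vanish, leaving
\begin{align*}
\inprod{\eps\p_t\b{E}_h}{\b{v}}+\inprod{\mu\p_t\b{H}_h}{\b{w}}=\ell_E(\b{v})+\ell_H(\b{w}).
\end{align*}

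Next I would bound the right-hand side through the dual norm. As $\projvw$ is an orthogonal projection it is a contraction, and since $0\le\tau(t)\le 1$ on $\I_n$ so is multiplication by $\tau$; hence $\|\b{v}\|_{L_2(\I_n;\bL{2}(\Omega))}\le\|\b{E}_h\|_{L_2(\I_n;\bL{2}(\Omega))}\le\epsmin^{-\half}\|\eps^\half\b{E}_h\|_{L_2(\I_n;\bL{2}(\Omega))}$, and analogously for $\b{w}$. The definition of $\|\cdot\|_{W_h^\prime}$ then gives
\begin{align*}
\ell_E(\b{v})+\ell_H(\b{w})\le{}&\epsmin^{-\half}\|\ell_E\|_{W_h^\prime}\,\|\eps^\half\b{E}_h\|_{L_2(\I_n;\bL{2}(\Omega))}\\
&+\mumin^{-\half}\|\ell_H\|_{W_h^\prime}\,\|\mu^\half\b{H}_h\|_{L_2(\I_n;\bL{2}(\Omega))}.
\end{align*}
Feeding this together with the identity above into Lemma \ref{stability:boundSTL2:lemma}, applying Young's inequality $xy\le\Delta t\,x^2+\tfrac{1}{4\Delta t}y^2$ to each of the two products, absorbing the resulting term $\tfrac{1}{4\Delta t}\big(\|\eps^\half\b{E}_h\|_{L_2(\I_n;\bL{2}(\Omega))}^2+\|\mu^\half\b{H}_h\|_{L_2(\I_n;\bL{2}(\Omega))}^2\big)$ into the left-hand side, and finally multiplying by $4\Delta t$, yields \eqref{stability:boundSTL2lemma2:eq}.

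The step I expect to be the main obstacle is the first one: showing that the curl and face terms still cancel for the modified test functions $\Pi\b{E}_h,\,\Pi\b{H}_h$ rather than for $\projvw\b{E}_h,\,\projvw\b{H}_h$. Everything there rests on $\Pi$ being temporal, self-adjoint and interface-independent, and the last property is exactly where the hypothesis of no local $h$-refinement in time enters, through $\Pi^1\b{H}_h^2=\Pi^2\b{H}_h^2$. Once this is secured, the remaining contraction and Young estimates are routine.
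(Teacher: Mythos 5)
Your proposal is correct and follows essentially the same route as the paper: the identical test functions $\projvw(\tau(t)\projvw\b{E}_h)$, $\projvw(\tau(t)\projvw\b{H}_h)$, the same cancellation of curl and face terms via the argument of Theorem \ref{stability:energy:no_local_refinement}, and the same combination of Lemma \ref{stability:boundSTL2:lemma} with dual-norm and Young estimates followed by absorption. Your packaging of the test functions through the self-adjoint temporal operator $\Pi=\projvw M_\tau\projvw$ is only a notational tidying of what the paper does implicitly.
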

\begin{proof}
Choosing  $\b{v}=\projvw (\tau(t) \pvwE), \b{w}= \projvw (\tau(t) \pvwH)$ in
\eqref{eq:weakform} yields
\begin{align}
\label{stability:boundSTL2:weakform:tested}
&\inprod{\varepsilon \p_t\b{E}_h}{\projvw (\tau(t) \pvwE)} + \inprod{\mu
\p_t\b{H}_h}{\projvw (\tau(t) \pvwH)} \nonumber\\
&- \inprod{\b{H}_h}{\curlh \projvw (\tau(t) \pvwE)} + \inprod{\curlh
\b{E}_h}{\projvw (\tau(t) \pvwH)}\nonumber\\
& + \inprodf{\Fi\cup\Fb}{\avg{ \b{H}_h}}{\jumpt{\projvw (\tau(t) \pvwE)})}
- \inprodf{\Fi\cup\Fb}{\jumpt{ \b{E}_h}}{\avg{\projvw (\tau(t) \pvwH)})}
\nonumber\\
&= \ell_E(\projvw (\tau(t) \pvwE)) + \ell_H(\projvw (\tau(t) \pvwH)).
\end{align}

Following the line of arguments of the proof of Theorem \ref{stability:energy:no_local_refinement}, 
all terms, except the first two terms on the left-hand side of 
\eqref{stability:boundSTL2:weakform:tested} vanish. \\
Applying lemma \ref{stability:boundSTL2:lemma}, the Cauchy-Schwarz inequality, 
the arithmetic-geometric-mean inequality yields
\begin{align*}
\frac{1}{2\Delta t}\left( \|\eps^\half
\b{E}_h\|_{L_2(\I_n;\bL{2}(\Omega))}^2 + \|\eps^\half
\b{H}_h\|_{L_2(\I_n;\bL{2}(\Omega))}^2 \right)\\
\leq  \ell_E(\projvw (\tau(t) \pvwE)) + \ell_H(\projvw (\tau(t) \pvwH)) + \mathcal{E}(t_k)\\
\leq \|\ell_E\|_{W_h^\prime} \| \projvw (\tau(t)
\pvwE)\|_{L_2(\I_n;\bL{2}(\Omega))}
+  \|\ell_H\|_{W_h^\prime} \|
\projvw (\tau(t) \pvwH)\|_{L_2(\I_n;\bL{2}(\Omega))} + \mathcal{E}(t_k)
\\
\leq \Delta t \left( \epsmin^{-1}\|\ell_E\|_{W_h^\prime}^2  + \mumin^{-1} \|\ell_H\|_{W_h^\prime}^2 \right)  + \mathcal{E}(t_k)\\
+ \frac{\epsmin}{4\Delta t}  \| \projvw (\tau(t)
\pvwE)\|_{L_2(\I_n;\bL{2}(\Omega))}^2 +  \frac{\mumin}{4\Delta t}\|
\projvw (\tau(t) \pvwH)\|_{L_2(\I_n;\bL{2}(\Omega))}^2\\
\leq \Delta t \left( \epsmin^{-1}\|\ell_E\|_{W_h^\prime}^2  + \mumin^{-1} \|\ell_H\|_{W_h^\prime}^2 \right)  + \mathcal{E}(t_k)\\
+ \frac{1}{4\Delta t} \left( \| \eps^\half \projvw (\tau(t)
\pvwE)\|_{L_2(\I_n;\bL{2}(\Omega))}^2 +  \| \mu^\half \projvw (\tau(t)
\pvwH)\|_{L_2(\I_n;\bL{2}(\Omega))}^2 \right).
\end{align*}
Since $\|\projvw\|\leq 1$ and $1\geq\tau(t)\geq0$ on $\I_n$, we obtain the result.
\end{proof}
\newtheorem{stability:boundSTL2theorem}[stability:energy]{Theorem}
\begin{stability:boundSTL2theorem}\label{stability:boundSTL2:theorem}
Provided the temporal polynomial degree $p_t$ is constant and no local $h$-refinement with respect to time is present in the discretization
there holds for elementwise constant $\eps \geq \epsmin > 0$ and $\mu \geq \mumin > 0$
\begin{align}
\mathcal{E}(t_n) \leq 2\mathcal{E}(t_1) + 2\sum\limits_{n=1}^{N} \left(
(\frac{2t_n}{\epsmin} + \frac{\Delta t^2}{2t_n\epsmin})\|\ell_E^k\|_{W_h^\prime}^2  + (\frac{2t_n}{\mumin} + \frac{\Delta t^2}{2t_n\mumin})\|\ell_H^k\|_{W_h^\prime}^2 \right)
\end{align}
\end{stability:boundSTL2theorem}
\begin{proof}
The proof follows along the lines of \cite{griesmaier_discretization_2013} Corollary 1.\\
Denoting $n_{max}=\argmax\limits_n \mathcal{E}(t_n)$, we have by
Theorem \ref{stability:energy:no_local_refinement} and the Cauchy-Schwarz
inequality, the arithmetic geometric mean inequality, \eqref{stability:boundSTL2lemma2:eq}
\begin{align*}
\max\limits_n \mathcal{E}(t_n) \leq \mathcal{E}(t_1) +
\sum\limits_{n=1}^{n_{max}} \left( \|\ell_E\|_{W_h^\prime} \|
\b{E}_h\|_{L_2(\I_n;\bL{2}(\Omega))}  + \|\ell_H\|_{W_h^\prime} \|
\b{H}_h\|_{L_2(\I_n;\bL{2}(\Omega))} \right)\\
\leq \mathcal{E}(t_1) + \sum\limits_{n=1}^{n_{max}} \left(
\frac{\delta}{\epsmin} \|\ell_E\|_{W_h^\prime}^2 + \frac{\delta}{\mumin}
\|\ell_H\|_{W_h^\prime}^2  + \frac{\epsmin}{4\delta}\|
\b{E}_h\|_{L_2(\I_n;\bL{2}(\Omega))}^2  +  \frac{\mumin}{4\delta}\|\b{H}_h\|_{L_2(\I_n;\bL{2}(\Omega))}^2 \right)\\
\leq \mathcal{E}(t_1) + \sum\limits_{n=1}^{n_{max}} \left(
(\frac{\delta}{\epsmin} + \frac{\Delta
t^2}{\epsmin\delta})\|\ell_E\|_{W_h^\prime}^2 + (\frac{\delta}{\mumin} + \frac{\Delta t^2}{\mumin\delta})\|\ell_H\|_{W_h^\prime}^2  +  \frac{\Delta t}{\delta} \mathcal{E}(t_n) \right).
\end{align*}
Chosing $\delta=2t_{n_{max}}$ we have
\begin{align*}
\sum\limits_{n=1}^{n_{max}} \frac{\Delta t}{2t_{n_{max}}} \mathcal{E}(t_n) \leq
\half \max\limits_n \mathcal{E}(t_n),
\end{align*}
and thus the estimate for $\mathcal{E}(t_n)$, which can be applied to the corresponding term in \eqref{stability:boundSTL2lemma2:eq}.
\end{proof}

\section{Stability - discretizations with local $hp$-refinement in time}
In this section we show, that stability in the $L_2(\Omega)$-norm can be obtained also
for discretizations with local $hp$-refinement in time.
This is achieved by adding a
suitable stabilization term, which amounts to restoring anti-symmetry in the
coupling flux terms.
\subsection{Stability: $L_2(\Omega)-Norm$}
For an interior space-time face
$\I_n\times f, f \in \mathcal{F}_0$ shared by $\I_n \times K^i, i=1,2$,
we denote by $_f : L_2(\I_n) \rightarrow \widetilde{T}(\I_n)$ the
$L_2$-orthogonal projection operator onto the largest common temporal testspace $\widetilde{T}(\I_n):=T_{K^1}(\I_n) \cap
T_{K^2}(\I)$. Further we denote by $\projt$ the projector whose restriction to 
$\I_n\times f$ is  $\projt_f$.\\
We add the stabilization form
\begin{align}
\label{eq:bilinearform:S}
&S_h(\b{U_h},\b{V}):=\half \left( \inprodf{\Fi}{\avg{ \b{H}_h}}{\jumpt{\projt \b{v}-\b{v}}} + \inprodf{\Fi}{\jumpt{ \b{H}_h}}{\avg{\projt \b{v}-\b{v}}} \right)\nonumber\\
& - \half \left( \inprodf{\Fi}{\avg{ \b{E}_h}}{\jumpt{\projt \b{w}-\b{w}}} + \inprodf{\Fi}{\jumpt{ \b{E}_h}}{\avg{\projt \b{w}-\b{w}}}\right).
\end{align}
and obtain the stabilized discrete problem
\begin{align}
\label{eq:weakform_stabilized}
&\mbox{Find}\;\b{U}_h \in V_h\times V_h\nonumber\\
&B_h(\b{U_h},\b{V}):=C_h(\b{U_h},\b{V}) + S_h(\b{U_h},\b{V})=L(\b{V})\; \forall
\;\b{V}\;\in W_h \times W_h.
\end{align}
Note, that in the case of globally in time refined discretization, as considered in the
previous section, we obtain $S_h(\b{U_h},\b{V})=0$ and   \eqref{eq:weakform_stabilized}
reduces to \eqref{eq:weakform}.\\
Now we can generalize theorem \ref{stability:energy:no_local_refinement} to
discretizations with local $hp$-refinement in time:
\begin{theorem}\label{stability:energy}
For element-wise constant material parameters $\eps, \mu$
\begin{align}\label{stability:energy:eq}
\mathcal{E}(t_{n+1}) - \mathcal{E}(t_{n})=\inprod{\b{J}}{\pvwE} - \inprodf{\Fb}{\b{n}\times\b{g}}{\pvwH}.
\end{align}
\end{theorem}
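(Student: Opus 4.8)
The plan is to re-run the proof of Theorem~\ref{stability:energy:no_local_refinement} verbatim for the consistent part $C_h$ of the stabilized form, and then to show that the stabilization $S_h$ supplies exactly the correction needed to restore the flux cancellation that local refinement in time destroys. Concretely, I would test \eqref{eq:weakform_stabilized} with $\b{v}=\pvwE$ and $\b{w}=\pvwH$, so that $C_h(\b{U_h},\b{V})+S_h(\b{U_h},\b{V})=L(\b{V})$ with the same right-hand side as in \eqref{stability:weakform:tested:global_refinement}. The two time-derivative terms again collapse to $\mathcal{E}(t_{n+1})-\mathcal{E}(t_n)$ by integration by parts in time, using $\p_t\b{E}_h,\p_t\b{H}_h\in W_h$ and the self-adjointness of the temporal projection $\projvw$; the two volume curl terms again cancel by that same self-adjointness. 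Neither argument refers to the temporal test spaces across element interfaces, so both carry over unchanged.

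The difference appears only in the interior-face flux terms, where I would repeat the decomposition into $T_1,\dots,T_8$ of \eqref{stability:fluxterms}. The non-coupling pairs $T_1+T_5$ and $T_4+T_8$ still vanish, since they involve a single local projection $\pi^i$ on one side and its self-adjointness. The coupling pairs no longer cancel: moving $\pi^1$ onto the opposite factor yields a residual $\half\inprodf{f}{(\pi^1-\pi^2)\b{H}_h^2}{\b{n}^1\times\b{E}_h^1}$ together with its companion from $T_3+T_6$, and this is nonzero precisely when $T_{K^1}(\I_n)\neq T_{K^2}(\I_n)$, i.e. under local $h$- or $p$-refinement in time. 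Summing these leftover contributions over all $f\in\Fi$ produces a functional $R(\b{U_h})$ measuring the mismatch of the two one-sided temporal projections; boundary faces carry no coupling and are treated as the non-coupling interior terms.

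The crux is then to identify $R$ with $-S_h(\b{U_h},\b{V})$ evaluated at $\b{v}=\pvwE$, $\b{w}=\pvwH$. Here I would exploit that $\projt_f$ projects onto the common temporal test space $\widetilde{T}(\I_n)=T_{K^1}(\I_n)\cap T_{K^2}(\I_n)$, which is nested in each $T_{K^i}(\I_n)$. The tower property for nested orthogonal projections gives $\projt_f\pi^i=\projt_f$, together with self-adjointness of all projections. Since the trace of $\pvwE$ from $K^i$ equals $\pi^i\b{E}_h^i$, inserting $\projt\b{v}-\b{v}$ into the average/jump pairings of \eqref{eq:bilinearform:S} reproduces exactly the one-sided projection differences $(\pi^1-\pi^2)$ of the residual, with opposite sign. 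In effect, the stabilization replaces the inconsistent one-sided projections on each face by the single common projection $\projt_f$, which agrees when viewed from either side and hence recovers the anti-symmetry of the coupling fluxes.

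I expect the main obstacle to be the bookkeeping in this last step: one must track how the two halves of $S_h$ (the $\avg{\cdot}$--$\jumpt{\cdot}$ and $\jumpt{\cdot}$--$\avg{\cdot}$ pairings) distribute onto the two sides of each interior face, and verify that after applying $\projt_f\pi^i=\projt_f$, self-adjointness, and the one-sided decompositions of $\avg{\cdot}$ and $\jumpt{\cdot}$, the stabilization terms match the leftover coupling residual term by term, including the factors of $\half$ and the signs. The algebra is routine once these projection identities are in place, but sign and factor errors are easy to make, so the safe route is to expand both $R$ and $S_h$ in the common one-sided basis $\{\b{E}_h^1,\b{E}_h^2,\b{H}_h^1,\b{H}_h^2\}$ and compare coefficients.
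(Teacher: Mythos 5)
Your proposal is correct and follows essentially the same route as the paper's proof: test \eqref{eq:weakform_stabilized} with $\b{v}=\pvwE$, $\b{w}=\pvwH$, treat the volume and boundary-face terms exactly as in Theorem~\ref{stability:energy:no_local_refinement}, and cancel the interior-face terms using self-adjointness of the projections together with the nesting identity $\projt_f\pi^i=\projt_f$. The only difference is bookkeeping: the paper merges the $C_h$ and $S_h$ face contributions into a single decomposition $T_1+\cdots+T_8$ and cancels pairwise, whereas you isolate the defect left by the coupling terms of $C_h$ and show that $S_h$ supplies exactly its negative --- the two computations are algebraically identical.
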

\begin{proof}
Regarding the treatment of the volume integrals and boundary faces, the proof is
identical to the one of theorem \ref{stability:energy:no_local_refinement}.
Thus, in the following we will only consider the mesh-dependent terms associated with interior faces $f\in\Fi$.\\
Again, we denote by $\pvwE, \pvwH$ the
$L_2$-projections of the discrete solution onto the test space. Choosing  $\b{v}=\pvwE,
\b{w}=\pvwH$ in \eqref{eq:weakform_stabilized} yields
\begin{align}
\label{stability:weakform:tested}
&\inprod{\varepsilon \p_t\b{E}_h}{\pvwE} + \inprod{\mu \p_t\b{H}_h}{\pvwH} \nonumber\\
&- \inprod{\b{H}_h}{\curlh \pvwE} + \inprod{\curlh \b{E}_h}{\pvwH}\nonumber\\
& + \inprodf{\Fi\cup\Fb}{\avg{ \b{H}_h}}{\jumpt{\pvwE}} - \inprodf{\Fi\cup\Fb}{\jumpt{ \b{E}_h}}{\avg{\pvwH}} \nonumber\\
&+ \half \left( \inprodf{\Fi}{\avg{ \b{H}_h}}{\jumpt{\projt \pvwE-\pvwE}} + \inprodf{\Fi}{\jumpt{ \b{H}_h}}{\avg{\projt \pvwE-\pvwE}} \right)\nonumber\\
& - \half \left( \inprodf{\Fi}{\avg{ \b{E}_h}}{\jumpt{\projt \pvwH-\pvwH}} + \inprodf{\Fi}{\jumpt{ \b{E}_h}}{\avg{\projt \pvwH-\pvwH}}\right)\nonumber\\
&= \inprod{\b{J}}{\pvwE} - \inprodf{\Fb}{\b{n}\times\b{g}}{\pvwH}
\end{align}
Algebraic manipulations lead to
\begin{align}
\label{stability:fluxterms_stabilized}
&  \inprodf{f}{\avg{ \b{H}_h}}{\jumpt{\pvwE}} -
\inprodf{f}{\jumpt{ \b{E}_h}}{\avg{\pvwH}} \nonumber\\
&+ \half \left( \inprodf{f}{\avg{ \b{H}_h}}{\jumpt{\projt_f \pvwE-\pvwE}} + \inprodf{f}{\jumpt{ \b{H}_h}}{\avg{\projt_f \pvwE-\pvwE}} \right)\nonumber\\
& - \half \left( \inprodf{f}{\avg{ \b{E}_h}}{\jumpt{\projt_f \pvwH-\pvwH}} + \inprodf{f}{\jumpt{ \b{E}_h}}{\avg{\projt_f \pvwH-\pvwH}}\right)\nonumber\\
& =  \half \inprodf{f}{\b{H}_h^1}{\b{n}^1 \times \pi^1\b{E}_h^1} + \half
\inprodf{f}{\b{H}_h^2}{\b{n}^1 \times \projt_f \pi^1\b{E}_h^1}\nonumber\\
& +  \half \inprodf{f}{\b{H}_h^1}{\b{n}^2 \times \projt_f \pi^2\b{E}_h^2} +
\half \inprodf{f}{\b{H}_h^2}{\b{n}^2 \times \pi^2\b{E}_h^2}\nonumber\\
& -  \half \inprodf{f}{\pi^1\b{H}_h^1}{\b{n}^1 \times \b{E}_h^1} - \half
\inprodf{f}{\projt_f \pi^1\b{H}_h^1}{\b{n}^2 \times \b{E}_h^2}\nonumber\\
& -  \half \inprodf{f}{\projt_f\pi^2\b{H}_h^2}{\b{n}^1 \times \b{E}_h^1} -
\half \inprodf{f}{\pi^2\b{H}_h^2}{\b{n}^2 \times \b{E}_h^2}\\
&=T_1+T_2+T_3+T_4+T_5+T_6+T_7+T_8\nonumber,
\end{align}
with $\pi^1$ and $\pi^2$ as in \eqref{stability:fluxterms}.
Comparing \eqref{stability:fluxterms_stabilized} with the corresponding terms in the
non-stabilized case \eqref{stability:fluxterms}, we observe, that the non-coupling
terms are identical, leading to $T_1+T_5=0$ and $T_4+T_8=0$. Only the coupling
terms, are modified due to the stabilization form.\\
We have
\begin{align*}
T_2= \half \inprodf{f}{\b{H}_h^2}{\b{n}^1 \times \projt_f \pi^1\b{E}_h^1}= \half
\inprodf{f}{\projt_f \b{H}_h^2}{\b{n}^1 \times \b{E}_h^1}
\end{align*}
and  $\pi^1 \projt_f\b{H}^2_h=\projt_f\b{H}^2_h$, since the temporal part of
$\projt_f\b{H}^2_h$ belongs to $T_{K^1}(\I_n)$.\\
Further, again using the symmetry of $\projt_f$ and $\pi^2$ we obtain
\begin{align*}
T_7=-\half \inprodf{f}{\projt_f \pvwH^2}{\b{n}^1 \times \b{E}_h^1} = -\half
\inprodf{f}{\projt_f \b{H}_h^2}{\b{n}^1 \times \b{E}_h^1},
\end{align*}
and thus $T_2+T_7=0$. Analogously  we obtain $T_3+T_6=0$.
\end{proof}
\subsection{A remark regarding dissipative stabilization}
The problem \eqref{eq:weakform_stabilized} can also be augmented with an
dissipative stabilization term, to obtain an upwind-type formulation (see
\cite{hesthaven_nodal_2002}, \cite{montseny_dissipative_2008}).
This is in particular advantageous for problems with strong singularities, or
when parts of the solution can be kept under-resolved. 
In this case, the additional stabilization term is given by
\begin{align*}
&D_h(\b{U}_h,\b{V}):=\inprodf{\Fi}{\alpha_E \jumpt{
\b{E}_h}}{\jumpt{\projt\b{v}}} + \inprodf{\Fb}{\alpha_E \jumpt{
\b{E}_h-\b{g}}}{\jumpt{\b{v}}}\\
&\inprodf{\Fi}{\alpha_H \jumpt{ \b{H}_h}}{\jumpt{\projt\b{w}}},\\
&\alpha_E=c\avg{\sqrt{\mu/\eps}}^{-1},\;\;\;
\alpha_H=c\avg{\sqrt{\eps/\mu}}^{-1}, \;\; c \geq 0.
\end{align*} 
However, this comes at the cost of adding
dissipation to the problem. Again assuming element-wise constant material 
parameters $\eps, \mu$, we obtain
\begin{align*}
&\mathcal{E}(t_{n+1}) - \mathcal{E}(t_{n})=\inprod{\b{J}}{\pvwE} -
\inprodf{\Fb}{\b{n}\times\b{g}}{\pvwH}\\
&-\inprodf{\Fi}{\alpha_E \jumpt{ \projt \b{E}_h}}{\jumpt{\projt\b{E}_h}} -
\inprodf{\Fb}{\alpha_E \jumpt{
\b{E}_h-\b{g}}}{\jumpt{\b{E}_h}}\\
&-\inprodf{\Fi}{\alpha_H \jumpt{\projt \b{H}_h}}{\jumpt{\projt\b{H}_h}},
\end{align*}
where the last three terms on the right-hand side are characteristic for the
dissipative formulation.
\section{Implementation}
For each time slab, \eqref{eq:weakform} yields a linear system of equations. Since
especially for three-dimensional problems, the direct solution becomes unfeasible due to
the large number of unknowns and high memory demands of sparse direct solvers, we resort
to an iterative solution of \eqref{eq:weakform}. Rather than assembling a matrix we
implement the evaluation of the residual
\begin{align}
\label{eq:residual}
R_h(\b{V})=B_h(\b{U}_h,\b{V})-L(\b{V})
\end{align}
directly. This is in particular advantageous in the context of adaptivity, where the discretization may change from time slab to time slab.\\
Furthermore under the assumptions of Theorem \ref{stability:boundSTL2:theorem}, we can derive a guaranteed error bound on the iteration error. This allows to balance discretization and iteration error, leading to greatly reduced computational costs.  The resulting method will, from the computational point of view behave similarly as explicit methods.
\subsection{Basis functions for trial- and test-space}
In order to allow for the efficient evaluation of the residual,  we chose the tensor product basis functions
\begin{align}
& \hat{\b{v}}_{cijkl}= l_i(\frac{t-t_k}{|I_k^K|}) \basiss{cjkl}\\
& \basiss{cjkl}=L_j(\hat{x})L_k(\hat{y})L_l(\hat{z})\,\b{e}_c\nonumber\\
&  i=0,...,p_t,\;\;j=0,...,p_x,\;\;k=0,...,p_y,\;\;l=0,...,p_z
\end{align}
for the local space $V_{h,\hat{K}}^k$.
Here $L_i(\xi)\,i=0,...,p$ denote the orthonormal Legendre polynomials on
$[0,1]$ and $l_i(t)$, the integrated Legendre polynomials on $[0,1]$:

\begin{align}
\label{def:ilegendre}
l_0(\xi)=1-\xi, \hspace{1cm} l_1(\xi)=\xi, \hspace{1cm} l_i(\xi)=\int\limits_{0}^\xi L_{i-1}(s) ds\;\;i=2,...,p\nonumber
\end{align}
Thus, the approximate solution in space-time element $I_k^K \times K$ can be expanded as $\restr{\b{E}_h}{I_k^K \times K}=DF^{-T} \hat{\b{v}}_{cijkl} e_{cijkl}$, where $e_{cijkl}$ denotes the coefficients.

For the local test-spaces $W_{h,\hat{K}}^k$ the basis is chosen to consist entirely of Legendre polynomials
\begin{align}
& \hat{\b{w}}_{cijkl}=L_i(\frac{t-t_k}{|I_k^K|}) \basiss{cjkl}\\
& \basiss{cjkl}=L_j(\hat{x})L_k(\hat{y})L_l(\hat{z})\,\b{e}_c\nonumber\\
&  i=0,...,p_t-1, j=0,...,p_x,k=0,...,p_y,l=0,...,p_z.
\end{align}

\subsection{Efficient evaluation of the space-time residual}
In the following, we outline how the above choices of basis for the trial
and test spaces lead to an efficient evaluation of the
space-time residual \eqref{eq:residual}.
In particular, for each space-time element,
the residual can be evaluated with optimal complexity of $\mathcal{O}(p^4)$
operations in the case of affine elements and elementwise constant $\eps,\mu$
and $\mathcal{O}(p^5)$ operations for non-affine elements, where for
simplicity, in the complexity analysis, we assume isotropic polynomial
degrees $p_t = p_x= p_y = p_z$.
In the following section we use index notation and summation convention.
\subsubsection*{Mass residual containing time derivatives}
Because of the tensor product structure of the local basis functions we can factor 
the space-time integral as follows
\begin{align*}
&R_E^{mass}\hspace{-3pt}(\b{v})=\inprodlocst{\varepsilon \p_t\b{E}_h}{\b{v}} =
\inprodlocst{\varepsilon \p_t  DF^{-T} \hat{\b{v}}_{cijkl} }{DF^{-T} \hat{\b{v}}_{dmnpq}} e_{dmnpq} \\
&= \int\limits_{I_k^K} L_i(\xi) \p_t l_m(\xi)d\xi \,\mathcal{M}_{cjkl,dnpq} \,e_{dmnpq},\\
&\mathcal{M}_{cjkl,dnpq}^\varepsilon=\int_{\hat{K}}\varepsilon DF^{-T} \basiss{cjkl}  DF^{-T} \basiss{dnpq} |J| d\hat{x} d\hat{y} d\hat{z}
\end{align*}
Note, that
we have $\int_0^1 L_i(\xi) \p_t l_m(\xi)d\xi=\delta_{im}$ for $i>1, m>1$ 
due to \eqref{def:ilegendre} and the orthogonality of the Legendre polynomials. 
Thus, evaluating the time derivative term corresponds to applying a spatial mass
matrix $\mathcal{M}_{cjkl,dnpq}$  $\mathcal{O}(p_t)$ times. 
Each application of $\mathcal{M}_{cjkl,dnpq}$ can be done with
$\mathcal{O}(p^3)$ operations for affine and $\mathcal{O}(p^4)$ operations 
for non-affine elements using fast summation techniques
\cite{melenk_fully_2001}, leading to a complexity of $\mathcal{O}(p^4)$ and
$\mathcal{O}(p^5)$ respectively.

\subsubsection*{Curl residual}
\begin{align*}
&R_H^{curl}\hspace{-3pt}(\b{v})=\inprodlocst{\curl \b{E}_h}{
\b{v}}\;\;\;R_{dmnpq}^{curl} =  \int\limits_{I_k^K} L_i(\xi) l_m(\xi)d\xi \,\mathcal{C}_{{cjkl,dnpq}}^{vol} \,e_{dmnpq}\\
&\mathcal{C}_{{cjkl,dnpq}}^{vol} = \int_{\hat{K}} \basiss{cjkl} \cdot \curl \basiss{dnpq} d\hat{x} d\hat{y} d\hat{z}
\end{align*}
Here, we also have linear complexity in $p_t$ for the number of volume-curl evaluations, since for $m>2$ there holds $l_m(\xi)=(L_{m+1}(\xi)-L_{m-1}(\xi))/\sqrt(2m+1)$. Note that the curl can also be evaluated with $\mathcal{O}(p^3)$ operations using recurrence relations for the derivatives of the Legendre polynomials (see e.g. \cite{koutschan_computer_2011}), such that a total complexity of $\mathcal{O}(p^4)$ is obtained.
\subsubsection*{Flux terms}
The flux terms are evaluated using fast summation techniques, such that in total 
$\mathcal{O}(p^4)$ operations are needed. For two space-time elements $I^1\times K^1,\,I^2\times K^2$ sharing the face $f_t \times f_s, f_t=I^1 \cap I^2, f_s=\p K^1 \cap \p K^2$, we consider the evaluation of the flux involving neighbor coupling.
Note, that in particular we allow for nonconforming interfaces in space and time. Recalling that we have $K^i=F_i([0,1]^3), I^i=\tau_i([0,1])$ and we use a co-variant transform for the spatial variables, we can write
\begin{align}
\label{implementation:fluxterm}
&R_H^{flux}\hspace{-3pt}(\b{v})=\int_{f_t}\int_{f_s} \b{E}_h^2\cdot(\b{n}^1
\times \projt_f \b{v}^1) \d{S}\d{t} = \nonumber\\
&\int_{{\tau_1}^{-1}(f_t)} \int_{{F_1}^{-1}(f_s)} {DF_1}^T {DF_2}^{-T} \circ \psi_1^2 \hat{\b{E}}_h^2 \circ \psi_1^2 \cdot (\hat{\b{n}}^1 \times \projt_f \hat{\b{v}}^1) \d{\hat{x}}\d{\hat{y}}\d{\hat{t}}.
\end{align}
Here $\psi_1^2: {{\tau_1}^{-1}(I^1) \times F_1}^{-1}(f_s) \rightarrow {{\tau_2}^{-1}(I^2) \times F_2}^{-1}(f_s)$ is the mapping from the reference coordinates of $I^1\times K^1$ to those of $I^2\times K^2$. The mappings  $\psi_1^2$ and $DF_1^TDF_2^{-T}$ are constant \cite{cohen_spatial_2006} and in general affine linear with a scaled permutation matrix. Note, that the scaling part is different from the identity matrix for nonconforming interfaces only. Permutations in the spatial reference coordinates occur for general hexahedral meshes.
However, for simplicity, we assume in the following that $\psi_1^2$ is of the form $(\hat{x}^2,\hat{y}^2,\hat{z}^2,\hat{t}^2)=(s_x \hat{x}^1 +b_x,s_y \hat{y}^1 +b_y, s_z \hat{z}^1 +b_t,s_t \hat{t}^1 +b_t)$, i.e. no coordinate permutations are present.
We have for a face with (reference) normal $\hat{\b{n}}^1=\b{e}_{3}$
\begin{align}
\label{implementation:fluxterm:facecoefficients}
\hat{\b{E}}_h^2 = l_i(\hat{t})L_j(\hat{x})L_k(\hat{y})L_l(-1) e_{cijkl} \b{e}_c = l_i(\hat{t})L_j(\hat{x})L_k(\hat{y}) e_{cjkl}^F \b{e}_c.
\end{align}
Obviously, $e_{cijk}^F=L_l(-1) e_{cijkl}$ can be computed with $\mathcal{O}(p^4)$ operations.
With $I_{\hat{t}}=\tau_1^{-1}(f_t)$ and $I_{\hat{x}} \times I_{\hat{y}} = F_1^{-1}(f_s)$, using the tensor-product structure of the trial  and testspace basis functions $\hat{\b{v}}_{cijkl}$ and $\hat{\b{w}}_{dmnop}$ respectively, \eqref{implementation:fluxterm} can be written as
\begin{align*}
%\label{implementation:fluxterm:expanded}
&R_{dmnop}^{flux}=L_p(1) [\projt_f]_{mq} \int_{I_{\hat{t}}} l_i(s_t \hat{t}^1 +b_t)L_q(\hat{t}^1)\d{\hat{t}^1}  \int_{I_{\hat{x}}} L_j(s_x \hat{x}^1 +b_x) L_n(\hat{x}^1)\d{\hat{x}^1}\nonumber\\
&\times\,  \int_{I_{\hat{y}}} L_k(s_y \hat{y}^1 +b_y) L_o(\hat{y}^1) \d{\hat{y}^1} (\hat{\b{n}}^1 \times \b{e}_d) \cdot \b{e}_c e_{cijk}^F.
\end{align*}
Thus, the flux-residual $R_{dmnop}^{flux}$ can be evaluated as
\begin{align}
\label{implementation:fluxterm:summation}
&Aux1_{doij} =\int_{I_{\hat{y}}} L_k(s_y \hat{y}^1 +b_y) L_o(\hat{y}^1)\d{\hat{y}^1} (\hat{\b{n}}^1 \times \b{e}_d) \cdot \b{e}_c e_{cijk}^F\nonumber\\
&Aux2_{dnoi} = \int_{I_{\hat{x}}} L_j(s_x \hat{x}^1 +b_x) L_n(\hat{x}^1)\d{\hat{x}^1} Aux1_{doij}\nonumber\\
&Aux3_{dmno} = [\projt_f]_{mq} \int_{I_{\hat{t}}} l_i(s_t \hat{t}^1 +b_t)L_q(\hat{t}^1)\d{\hat{t}^1} Aux2_{dnoi}\nonumber\\
&R_{dmnop}^{flux} = L_p(1) Aux3_{dmno}.
\end{align}
The complexity of each summation in \eqref{implementation:fluxterm:summation} is $\mathcal{O}(p^4)$, in the case of a nonconforming interface in the respective direction. If in contrast the interface is conforming, the summation can be skipped due to the orthogonality properties of the trial and test basis functions.\\
In total the flux-residuals can be evaluated with $\mathcal{O}(p^4)$ operations. Note that the same applies to non-coupling terms also.

\subsection{Inexact iterative solution - guaranteed iteration error bound}
Solving the problem exactly can be very expensive. Instead we solve the problem inexactly and control the error introduced by the inexact solution.\\
Noting that the iteration error at solver iteration $m$ for timeslab $n$ fullfills
\begin{align*}
B_h(\b{U}_h^m-\b{U}_h,\b{V})=R_h^{m,n}(\b{V}).
\end{align*}
we can apply the stability estimate Theorem \ref{stability:boundSTL2:theorem} 
in order to obtain the guranteed bound on the errors $\b{e}^m=\b{E}_h^m-\b{E}_h$ 
and $\b{h}^m=\b{H}_h^m-\b{H}_h$ at time $t_N$
\begin{align}\label{eq::a_posteriori:iteration}
\|\eps^\half \b{e}^m(t_{N})\|_{\bL2(\Omega)}^2 + \|\mu^\half
\b{h}^m(t_{N})\|_{\bL2(\Omega)}^2 \leq  4\sum\limits_{n=1}^{N} \left(
(\frac{2t_N}{\epsmin}+ \frac{\Delta t^2}{2t_N
\epsmin})\|R_{h,E}^{m,n}\|_{W_h^\prime}^2 \right.\nonumber\\
\left.  + (\frac{2t_N}{\mumin} + \frac{\Delta
t^2}{2t_N\mumin})\|R_{h,H}^{m,n}\|_{W_h^\prime}^2 \right)=:\eta_{it}^2.
\end{align}
The dual norm of the residual can be evaluated exactly by computing its Riesz-representor, which  in this case, is just the application of a $L_2$-projection operator due to the entirely discontinuous test space.
\subsection{Remarks on the iterative solution proceedure}
Efficient preconditioning of the linear systems is currently an open problem.
Nevertheless, we would like to give some remarks
regarding this topic.\\
Since the linear system
governed by \eqref{eq:weakform_stabilized} is non-symmetric, we apply a
preconditioned GMRES solver with restarting after $n_{r}$ iterations, such
that, denoting with $N$ the number of degrees of freedom,  the memory requirement
for each timeslab is essentially $n_r \times N$. Usually we chose $n_r=10$. As
 preconditioner, we apply the time-derivative terms in
 \eqref{eq:weakform_stabilized} i.e.
\begin{align}\label{eq:preconditioner}
P(\b{U}_h,\b{V}):= \inprod{\varepsilon \p_t\b{E}_h}{\b{v}} + \inprod{\mu
\p_t\b{H}_h}{\b{w}}.
\end{align}
This choice of preconditioner has the advantage, that it is in the
worst-case block-diagonal with block size $3 p_x p_y p_z$.
Thus, the computational cost of one preconditioned GMRES-iteration associated with one
spatial element is in terms of computational cost comparable to $N_K$
time-steps of an explicit Runge-Kutta scheme with $p_t$ stages.
However, this simple preconditioner is effective only for sufficiently small
$\Delta t$.
This is due to the scaling with respect
to $\Delta t$ of the terms in \eqref{eq:weakform_stabilized} stemming from the
discretization of the curl-operator.
\subsubsection{Example}
In order to give an impression regarding the computational cost of the
iterative solution process, we report the number of iterations for a
$\mathrm{T}_{mm}$-mode in a cubic resonator in figure
\ref{fig:iterations_tm_mode} .
The resonator is discretized with $8\times8\times8$ hexahedral elements of degree $p=p_t=p_x=p_y=p_z$. We chose $\Delta
t=h/(2p+1)$, which corresponds to the maximal stable time-step reported in
\cite{sarmany_dispersion_2007} for an SSP low-storage Runge-Kutta method of
order $p+1$. Further we choose the wavenumber $m$ as 1, 2 and 3 for $p=1,2$,
$p=3,4$ and $p=5,6$ respectively. The exact solution of the linear systems
requires about $N_{it}=20-30$ iterations per time-slab. The resulting
computational costs are prohibitively high, in particular when compared to a
fully explicit scheme such as \cite{sarmany_dispersion_2007}, where the cost is
roughly equal to one iteration.
For inexact solution, the number of iterations is
reduced by a factor of 2-4.2 and is in the range of $\widetilde{N}_{it}=4-9$.
Furthermore, the iteration-error bound consistently overestimates the iteration
error with efficiency-index $I_{eff}=2-12$, such that the total error is
not increased by the inexact solution.
\begin{figure}
\begin{center}
\begin{tabular}{|l|l|l|l|l|l|l|l|}\hline
$p$ & steps & $N_{it}$ & $\widetilde{N}_{it}$ & speedup & $e(T)$
& $\widetilde{e}(T)$ & $I_{eff}$\\
\hline
$1$ & $189$ & $7.9$ & $4.0$ & $2.0$ & $2.04\times 10^{-1}$ & $2.04\times
10^{-1}$ & $1.66$ \\
$2$ &$315$ & $24.0$ & $6.1$ & $4.0$ & $3.65\times 10^{-3}$ & $3.90\times
10^{-3}$ & $3.12$ \\
$3$ &$220$ & $25.0$ & $6.0$ & $4.2$ & $5.57\times 10^{-3}$ & $5.31\times
10^{-3}$ & $6.75$ \\
$4$ &$283$ & $20.0$ & $7.8$ & $2.6$ & $2.30\times 10^{-4}$ & $2.26\times
10^{-4}$ & $12.58$ \\
$5$ &$213$ & $22.0$ & $8.6$ & $2.5$ & $1.91\times 10^{-4}$ & $1.83\times
10^{-4}$ & $5.67$ \\
$6$ &$252$ & $18.0$ & $8.8$ & $2.0$ & $4.84\times 10^{-5}$ & $4.83\times
10^{-5}$ & $5.70$\\
\hline
\end{tabular}

\caption{Number of time-steps, average number of GMRES-iterations $N_{it}$
per time-step for exact and $\widetilde{N}_{it}$ for inexact solution of the
linear systems, speedup in terms of total iterations, errors
$e(T)=\|\b{U}(T)-\b{U}_h(T)\|_{\bL{2}(\Omega)}$ and
$\widetilde{e}(T)=\|\b{U}(T)-\widetilde{\b{U}}_h(T)\|_{\bL{2}(\Omega)}$ for
exact and ineaxct solution and efficiency index for the iteration error bound
\eqref{eq::a_posteriori:iteration} $I_{eff}=\eta_{it}/\|\b{e}_h^n(T)-\b{e}_h(T)\|_{\bL{2}(\Omega)}$}
\label{fig:iterations_tm_mode}
\end{center}
\end{figure}
\section{Numerical Experiments}
The first two numerical experiments feature basic academic examples for assessing convergence properties when local refinement, especially with respect to time, is present in the discretization.
\subsection{Convergence tests on non-adaptive discretizations}
\subsubsection{TM Mode $hp$-discretization}
The initial data is chosen such that the $\mbox{TM}_{mn}$ mode is approximated in $\Omega=[0,1]\times[0,1]\times[0,1/5]$.
We choose $m=n=1$ leading to frequency $\omega=\pi\sqrt{m^2+n^2}$.
The exact solution for the electric field is:
\begin{equation}
\b{E}=\sin(m \pi x) \sin(n \pi y)\cos(\omega t) \b{e}_z
\end{equation}
% \vspace{-10pt}
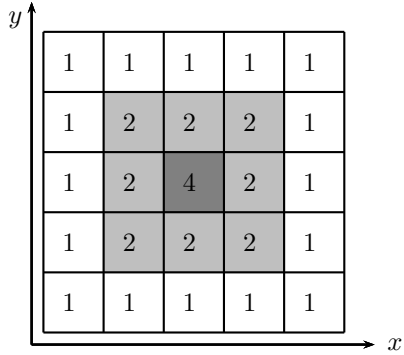
\begin{figure}[h!]
\centering
\psset{unit=0.8cm}
\begin{pspicture}(-1,0)(6,5.5)

\psaxes[labels=none,ticks=none]{->}(-0.2,-0.2)(-0.2,-0.2)(5.5,5.5)
%\psframe[fillstyle=solid,fillcolor=li](5,5)
\psframe[fillstyle=solid,fillcolor=lightgray,linewidth=0pt](1,1)(4,4)
\psframe[fillstyle=solid,fillcolor=gray,,linewidth=0pt](2,2)(3,3)

\uput[0](-0.8,5.2){$y$}
\uput[0](5.5,-0.2){$x$}
\psline(0,5)(5,5)
\psline(0,4)(5,4)
\psline(0,3)(5,3)
\psline(0,2)(5,2)
\psline(0,1)(5,1)
\psline(0,0)(5,0)

\psline(5,0)(5,5)
\psline(4,0)(4,5)
\psline(3,0)(3,5)
\psline(2,0)(2,5)
\psline(1,0)(1,5)
\psline(0,0)(0,5)

\uput[0](0.1,0.5){$1$}
\uput[0](1.1,0.5){$1$}
\uput[0](2.1,0.5){$1$}
\uput[0](3.1,0.5){$1$}
\uput[0](4.1,0.5){$1$}

\uput[0](0.1,1.5){$1$}
\uput[0](0.1,2.5){$1$}
\uput[0](0.1,3.5){$1$}
\uput[0](4.1,1.5){$1$}
\uput[0](4.1,2.5){$1$}
\uput[0](4.1,3.5){$1$}

\uput[0](0.1,4.5){$1$}
\uput[0](1.1,4.5){$1$}
\uput[0](2.1,4.5){$1$}
\uput[0](3.1,4.5){$1$}
\uput[0](4.1,4.5){$1$}

\uput[0](1.1,1.5){$2$}
\uput[0](2.1,1.5){$2$}
\uput[0](3.1,1.5){$2$}

\uput[0](1.1,3.5){$2$}
\uput[0](2.1,3.5){$2$}
\uput[0](3.1,3.5){$2$}

\uput[0](1.1,2.5){$2$}
\uput[0](3.1,2.5){$2$}

\uput[0](2.1,2.5){$4$}

\end{pspicture}
\caption{The spatial mesh. Numbers inside  elements denote the number of local time steps. The polynomial degrees are chosen isotropically (in space and time), and increase from $p=p_{min}$ to $p=p_{min}+2$ from the boundary to the center of the computational domain according to the shading.}\label{fig:TM11hp:discretization}
\end{figure}

The time traces of the spatial $L_2$-error are depicted for 200 periods of the solution in Fig. \ref{fig:TM11hp:error}. Furthermore, in Fig. \ref{fig:TM11hp:error} one can observe exponential convergence in the norm $\|\cdot\|_{L_2(\I_n;\bL{2}(\Omega))}$.

\begin{figure}[h!]
\centering
\begin{tabular}{cc}
\hspace{-10mm}\includegraphics[scale=0.33]{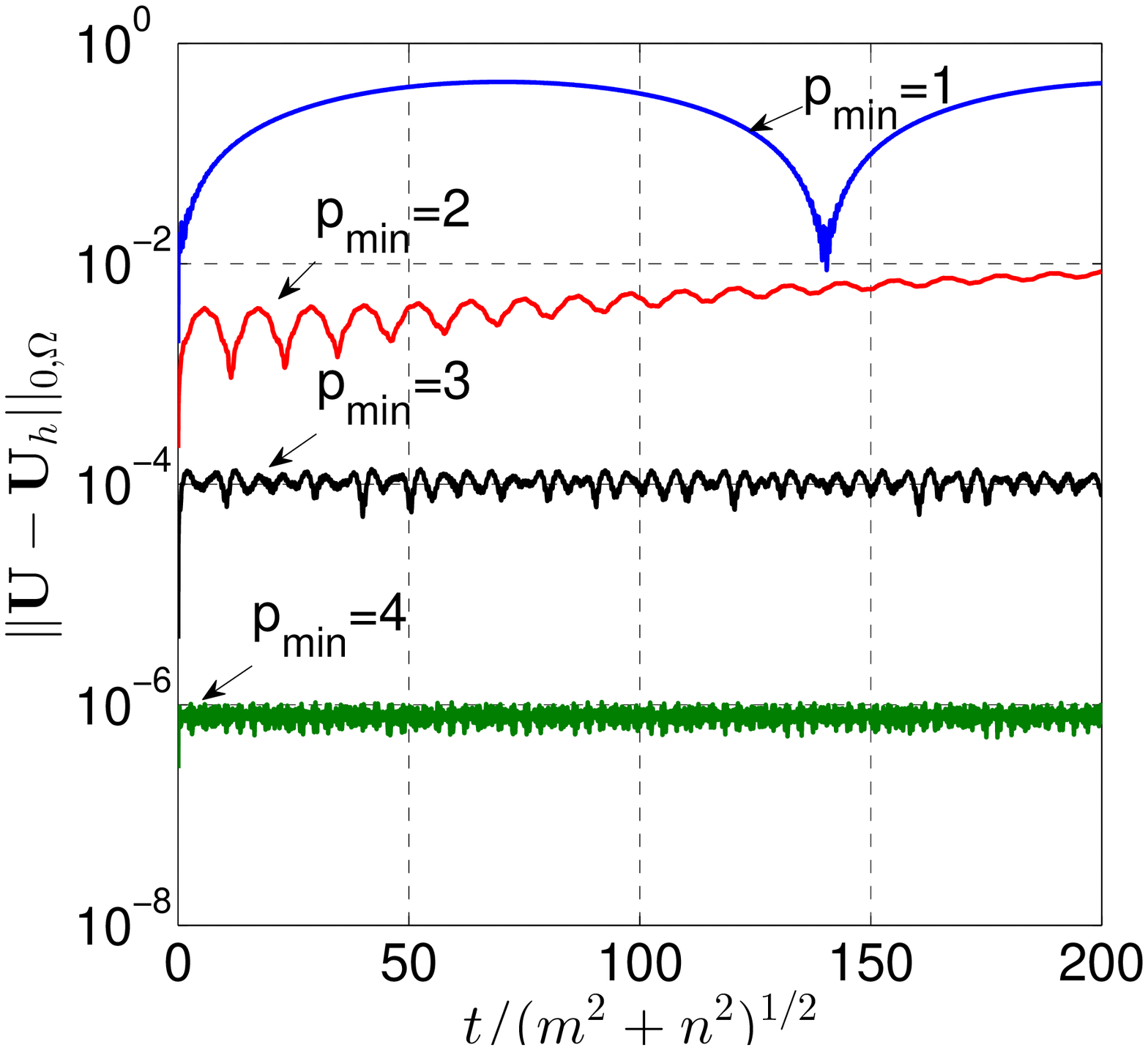}\hspace{-4mm} & \hspace{-4mm}\includegraphics[scale=0.33]{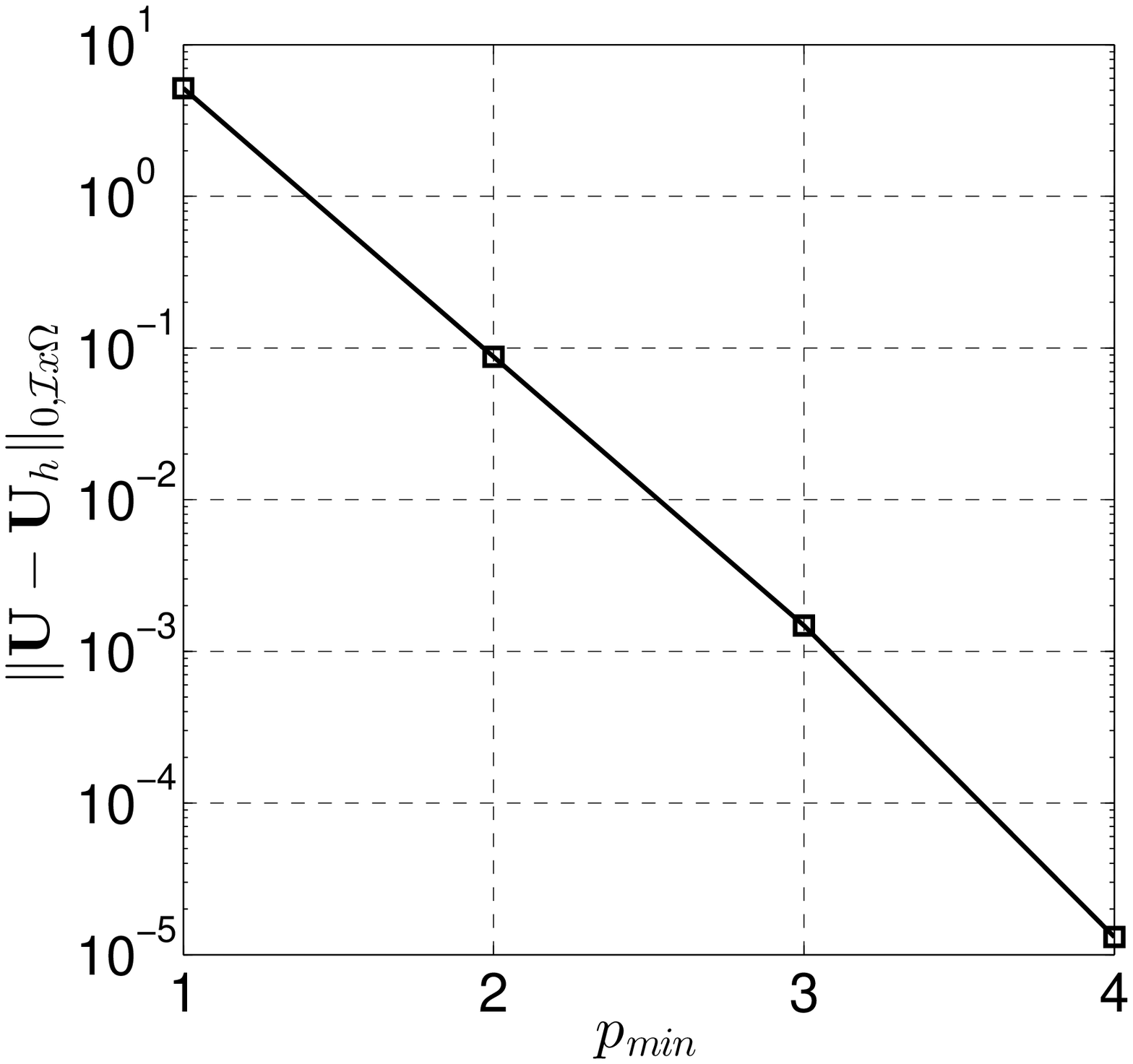}
\end{tabular}
\caption{Left: Temporal evolution of the spatial $L_2$-error, right: error $\|\b{U}-\b{U}_h\|_{L_2(\I;\bL{2}(\Omega))}$ for $p_{min}=1,2,3,4$ and the setup depicted in Fig. \ref{fig:TM11hp:discretization}}
\label{fig:TM11hp:error}
\end{figure}

\FloatBarrier
\subsubsection{Local $h$-refinement in time}
In order to exploit the temporal accuracy of the method in the case of local $h$-refinement 
in time direction, the source term and boundary conditions are chosen such that 
the exact solution \cite{verwer_composition_2012} is
\begin{align*}
\b{E}&=e^t x(x-1)z(1-z)\b{e}_y,\\
\b{H}&=e^t x(x-1)(1-2z)\b{e}_x - e^t (2x-1)z(1-z) \b{e}_z.
\end{align*}
The problem is solved on the space-time domain $\I\times\Omega$ with
$\I=[0,5], \Omega=[0,1]^3$, which is subdivided in time slabs $\Delta t\times\Omega$. 
The spatial mesh and the temporal refinement level associated with each spatial 
cell is shown in Fig. \ref{fig:temporal_accuracy:grid}. The solution is 
approximated with quadratic polynomials in the $x-z$ directions, such that the 
temporal error is expected to be dominating. In the temporal direction the
polynomial degree is set to $p_t$. In Fig. \ref{fig:temporal_accuracy:convergence} 
left, one can observe that the error in the norm $\|\cdot\|_{L_2(\I;\bL{2}(\Omega))}$ 
is of order $p_t+1$. If we instead consider the quantity  
$\max\limits_k \|\b{U}(t_k)-\b{U}_h(t_k)\|_{\bL{2}(\Omega)}$,  an order of
$2p_t$ can be observed in Fig. \ref{fig:temporal_accuracy:convergence} right. 
For continuous-Galerkin (cG) time stepping schemes, this nodal superconvergence
behaviour is reported in \cite{akrivis_galerkin_2011}.
\begin{figure}[h!]
\centering
\includegraphics[scale=0.3]{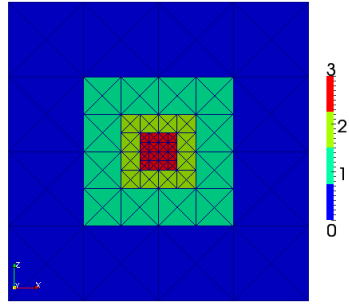}
\caption{Temporal refinement level}
\label{fig:temporal_accuracy:grid}
\end{figure}
\begin{figure}[h!]
\centering
\begin{tabular}{cc}
\hspace{-12mm}\includegraphics[scale=0.32]{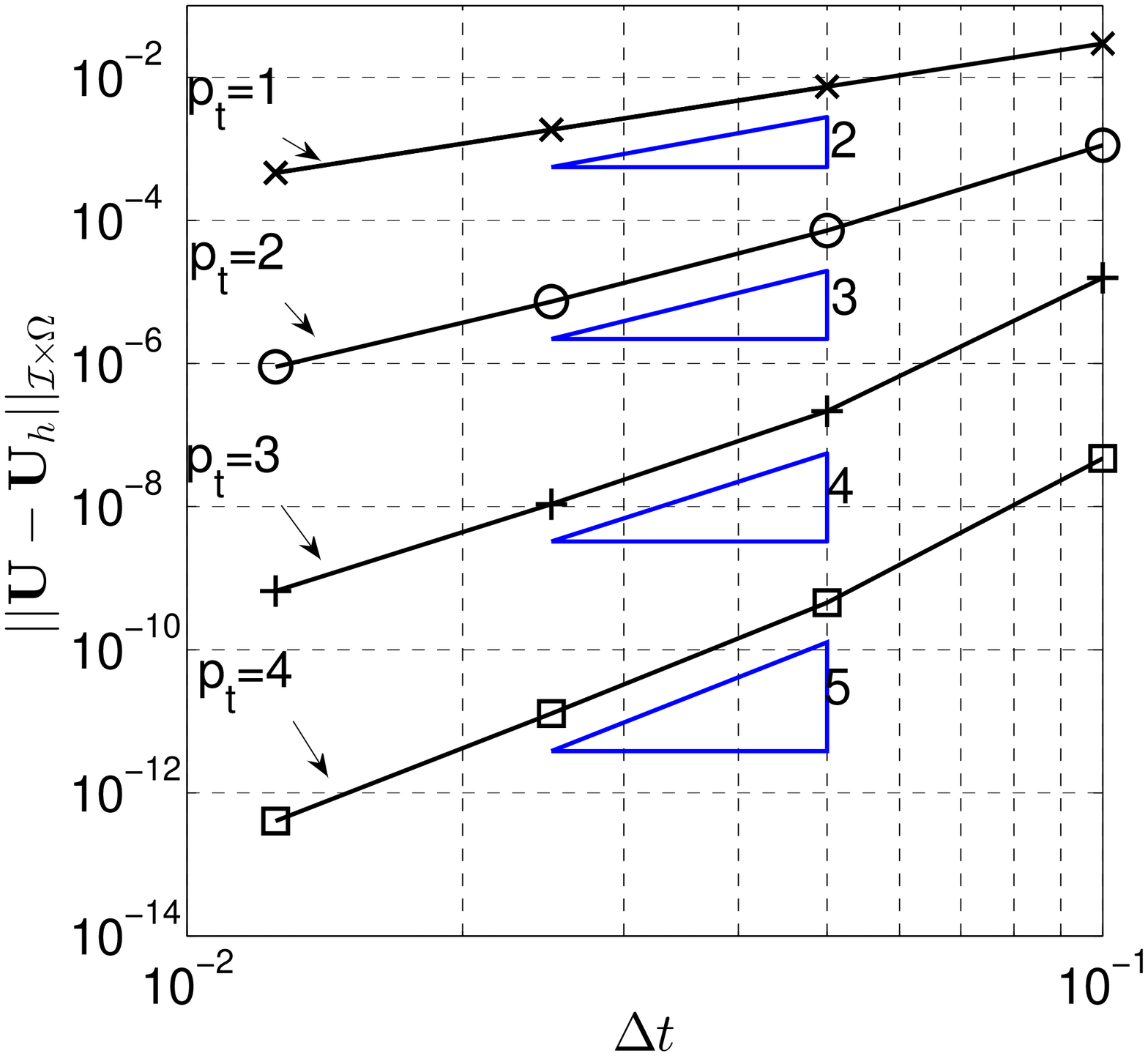}\hspace{-4mm} & \hspace{-4mm}\includegraphics[scale=0.32]{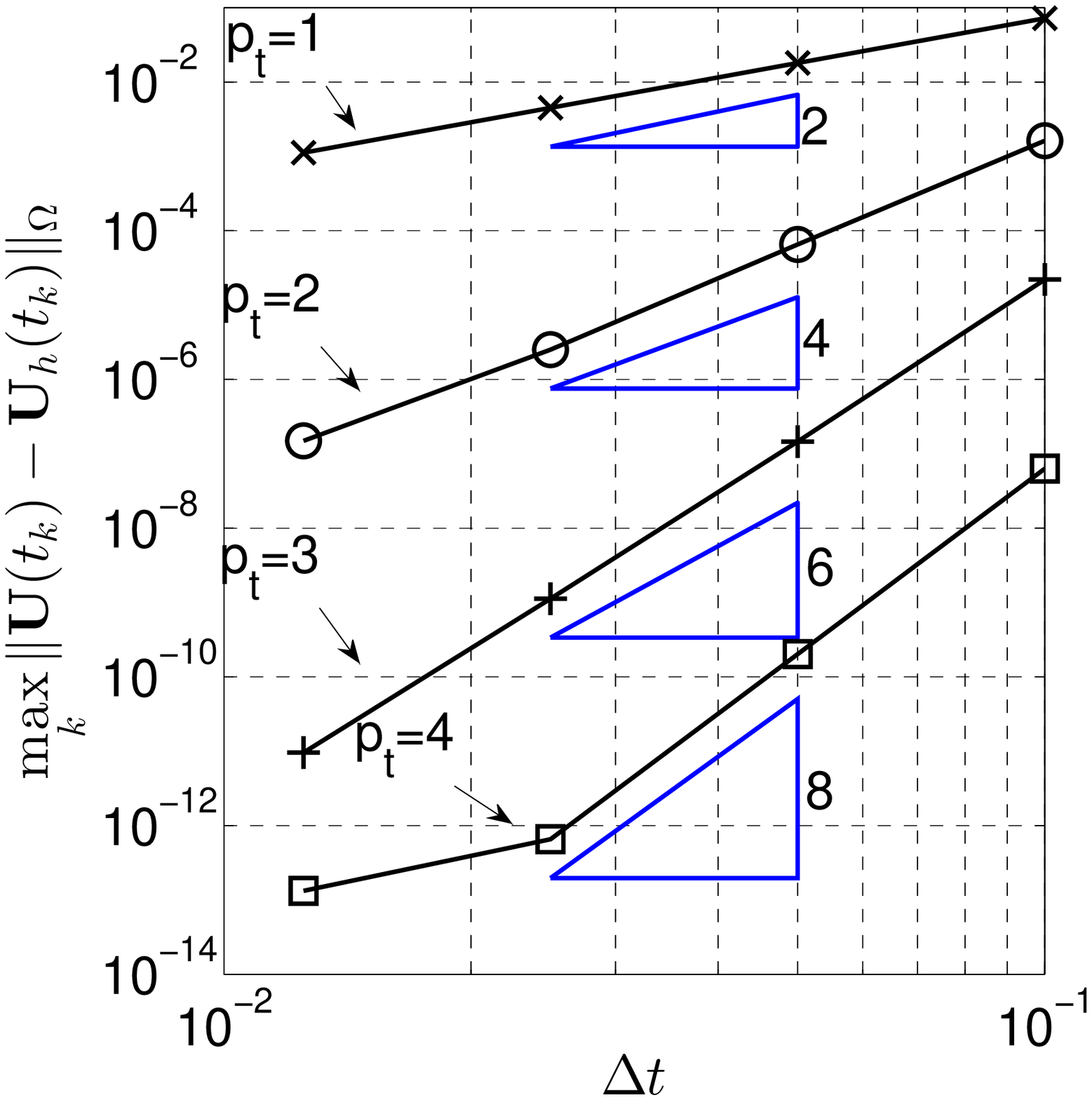}
\end{tabular}
\caption{Left: error $\|\b{U}-\b{U}_h\|_{L_2(\I;\bL{2}(\Omega))}$, right: error $\max\limits_k \|\b{U}(t_k)-\b{U}_h(t_k)\|_{0,\Omega}$ for the local $h$-refinement with respect to time depicted in Fig.\ref{fig:temporal_accuracy:grid}}
\label{fig:temporal_accuracy:convergence}
\end{figure}

\FloatBarrier
\subsubsection{Broadband pulse in a waveguide}
\label{sec:coaxial_waveguide_uniform_p}
We consider a broadband pulse in a coaxial waveguide. The exact solution is
\begin{align*}
\b{E}&=\frac{1}{r} e^{-(\pi(f_2-f_1)(z-t)/2)^2} \sin (\pi (f_1+f_2) (z-t)) \b{e}_r \\
\b{H}&=\frac{1}{r} e^{-(\pi(f_2-f_1)(z-t)/2)^2} \sin (\pi (f_1+f_2) (z-t)) \b{e}_{\varphi}.
\end{align*}
Here $r$, $\varphi$ and $z$ denote the radial, azimuthal and axial coordinates
respectively and $\b{e}_r$, $\b{e}_\varphi$ and $\b{e}_z$ the corresponding unit
vectors. The mid-frequency $(f_1+f_2)/2$ is chosen such that the corresponding
wavelength is approximately $1/16$ the length of the waveguide. The spatial mesh
consists of 1600 hexahedra with an edge-length of approximately $1/3 \lambda$.
The global time step is fixed, such that in total $400$ time steps are
neccessary to propagate the pulse through the entire waveguide. In Fig.
\ref{fig:coaxial_waveguide_uniform_p:error_vs_time} we show the time trace of
the spatial $L_2$-errors. The error exhibits an odd even pattern, the origin of
which is yet unknown. We suspect this behavior to be related to the choice of a
central flux. A similar behaviour is reported in \cite{hesthaven_nodal_2007}.
 \begin{figure}[h!]
\centering
\includegraphics[scale=0.33]{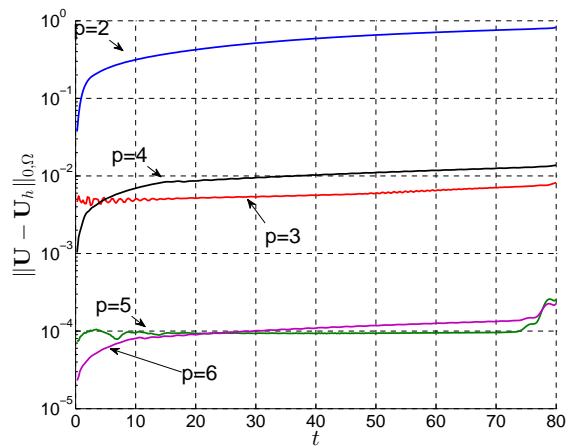}
\caption{Temporal evolution of the spatial $L_2$-error for $p=2,3,4,5,6$}
\label{fig:coaxial_waveguide_uniform_p:error_vs_time}
\end{figure}

\FloatBarrier
\subsubsection{Bi-static RCS, metallic sphere - global $p$-refinement}
A metallic sphere of radius $a=1$ is illuminated by a $\b{e}_x$-polarized plane wave traveling in $e_z$-direction with wave number $\omega=2\pi$. We chose $\Omega$ as a spherical shell with outer radius $R=5$. For the outer boundary a Silver-M\"uller boundary condition is applied. The spatial grid consists of only 576 hexahedral elements with about 2 elements per wavelength at the surface of the scatterer and one element per wavelength at the absorbing boundary. The polynomial degrees are chosen as $p=p_t=p_x=p_y=p_z=p_{geo}$, where $p_{geo}$ is the polynomial degree of the elemental mapping $F$.  We compute the bi-static RCS
\begin{align*}
\sigma(\phi,\theta)=\lim\limits_{r\rightarrow\infty}4\pi r^2\frac{|\b{E}_{sc}(r,\phi,\theta)|^2}{|\b{E}_{inc}(r,\phi,\theta)|^2}
\end{align*}
Here $r$ denotes the distance from the center of the sphere to the point of observation $r\b{x}$, $\phi$ and $\vartheta$ the azimuthal and polar angle between the wave and $\b{x}$.
The scattered far field is evaluated at a closed surface $S$, one wavelength away from the scatterer by evaluating the near-to-far field transformation
\begin{align*}
\b{E}_{sc}=\frac{e^{-i\omega r}}{4\pi r} \int\limits_{t_0}^{t_1}\int_S \left[ \b{x}\times(\b{x}\times(\b{n}\times\b{H}_h)) + \b{x}\times(\b{E}_h\times\b{n})\right]e^{i\omega\b{x}\cdot\b{y}-i\omega t} \d{S}(\b{y})\d{t}
\end{align*}
with Gauss quadrature of sufficiently high order. The computed RCS in Fig. \ref{fig:scattering_sphere_pec:rcs_p_refinement_global} converge quickly to the analytical Mie-series solution.
\begin{figure}[h!]
\centering
\includegraphics[scale=0.4]{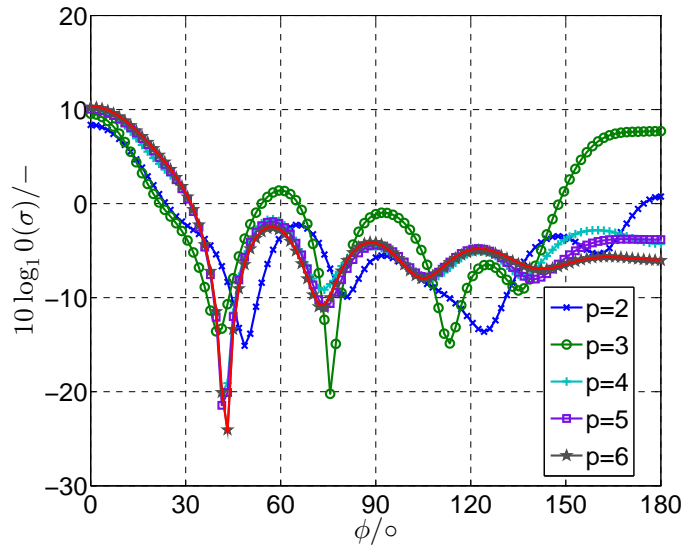}
\caption{Bi-static RCS,  of a $ka=2\pi$ PEC sphere, numerical solutions for a
576-element mesh with polynomial degrees $p=2,\ldots,6$ and the
analytical Mie-series solution (red)}
\label{fig:scattering_sphere_pec:rcs_p_refinement_global}
\end{figure}
\FloatBarrier
\subsection{Space-time $hp$-adaptive examples}

In order to demonstrate the method's capability for space-time $hp$-refinement, we propose an adaptive algorithm, which extends the concept of reference solutions \cite{demkowicz_computing_2007,solin_higher-order_2003} to the space-time context.
\begin{itemize}
  \item First a global time step $\Delta t$, resulting in equally sized time slabs, is chosen.
  \item Given a time slab $\Delta t\times\Omega$, an initial mesh $\mathcal{S}_0$ and polynomial-degree distribution $\b{p}_0$
  \item Define a coarse discretization $(\mathcal{S}_H,\b{p}_H)=(\mathcal{S}_0,\b{p}_0)$  and a refined discretization $(\mathcal{S}_h,\b{p}_h)$ with finite element spaces $V_H$ and $V_h$ respectively. The refined discretization is obtained by isotropically refining all space-time elements and increasing the polynomial degrees by one. Note that $V_H$ is contained in $V_h$. \\
  Then the following steps are performed iteratively
  \begin{enumerate}
    \item SOLVE: Solve the problem \eqref{eq:weakform} on the coarse and fine discretizations.
    \item ESTIMATE: Compute the error indicators $\eta(\I_n\times
    K)=\|\b{U}_h-\b{U}_H\|_{L_2(\I_n;\bL{2}(\Omega))}$ and the
    approximate error bound $\|\b{U}_h-\b{U}_H\|_{L_2(\I_n;\bL{2}(\Omega))} \leq TOL$ stop
    and proceed to the next time slab. Otherwise,
    \item MARK: Apply a fixed fraction marking strategy \cite{dorfler_convergent_1996} based on the indicators $\eta(\I_n\times K)$
    \item REFINE: For each marked $\I_n\times K$, set up a list of refinement candidates $V_{H,\I_n\times K}^{\mathrm{cand}}(\I_n\times\hat{K},\b{p}_K)$. In particular, we allow for all combinations of the following modifications of the discretization parameters:
    \begin{itemize}
      \item raise/decrease the polynomial degrees $p_x,p_y,p_z,p_t$
      \item isotropically $h$-refine/derefine in the spatial directions
      \item increase/decrease the temporal refinement level
    \end{itemize}
    In case of an refinement, which yields new space-time elements, we restrict the number of candidates by choosing identical polynomial degrees for each new element.\\
    Choose
    \begin{align*}
V_{H,\I_n\times K} = \argmin \limits_{V_H^{\mathrm{cand}}
S.T.\eta-\eta^{\mathrm{cand}}>0} \frac{\eta^{\mathrm{cand}} -\eta
}{\#DOF(V_H^{\mathrm{cand}})},
\eta^{\mathrm{cand}}\\
=\|\b{U}_h-\Pi_{V_h}^{V_H^{\mathrm{cand}}}\b{U}_h\|_{L_2(\I_n;\bL{2}(\Omega))}^2
\end{align*}
as the new local finite elements space, leading to a new global space $V_H$.  Build a new $V_h$ from $V_H$ and go back to SOLVE.
  \end{enumerate}
  \item Now, for the current time slab the final coarse and fine grid solutions have been obtained.
\end{itemize}
\emph{Remark1:}
The initial data for coarse- and fine-grid solves for the current time slab is taken as the $L_2$-projection of the refined solution $U_h$ from the previous time slab.
Note that in the case of spatial derefinement with respect to the previous time slab, i.e. if the spatial part of the finite element space is not contained in the current one, there will be dissipation introduced by the projection. See \cite{schnepp_efficient_2011}, for a more detailed discussion.
However the amount of dissipation introduced seemed to be negligible compared to the total error for the examples we have considered. \\
\emph{Remark2:}
If an iterative solver is applied, we can choose the coarse grid solution as starting point for solving the fine grid problem.
 We have observed, that this considerably cuts down the number of fine grid iterations.

\subsubsection{Broadband pulse in coaxial waveguide}
\begin{figure}[h!]
\centering
\psset{unit=0.8cm}
\begin{pspicture}(-1,-1)(5,5)
\psaxes[labels=none,ticks=none]{->}(-0.2,-0.2)(-0.2,-0.2)(1,1)
\pspolygon[fillcolor=lightgray,fillstyle=solid](0,0)(4,0)(2,2)(0,0)
\pspolygon[fillcolor=lightgray,fillstyle=solid](0,4)(4,4)(2,2)(0,4)
\psline(0,0)(4,0)
\psline(4,0)(4,4)
\psline(4,4)(0,4)
\psline(0,4)(0,0)
\psline(0,0)(4,4)
\psline(0,4)(4,0)
\uput[0](1.5,1){$p_y$}
\uput[0](0.5,2){$p_x$}
\uput[0](-1,0.8){$y$}
\uput[0](0.8,-0.5){$x$}
\end{pspicture}
\caption{Visualization of tensor product polynomials with degrees $p_x$,$p_y$}\label{fig:tensor_product_polynomial_visualization}
\end{figure}
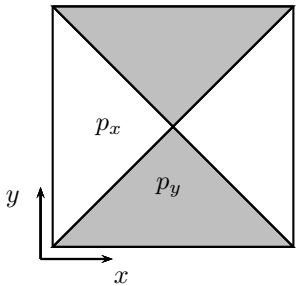
We have solved the example from \ref{sec:coaxial_waveguide_uniform_p} with the 
$hp$-adaptive algorithm. For the initial discretization we have chosen a coarse
mesh consisting of 200 elements of degree $p_t=1, p_x=p_y=p_z=0$. In Fig.
\ref{fig:coaxial_waveguide_adaptive_hp:discretization_step86} we visualize the coarse grid $hp$-discretization for time slab 86, by showing from top to
bottom the electric field magnitude, the spatial polynomial discretization,
using the tensor product visualization depicted in Fig.
\ref{fig:tensor_product_polynomial_visualization} and the temporal polynomial
degree distribution. In axial direction, where the pulse has greater variation, spatial polynomial degrees are chosen to be larger than in radial direction. The temporal polynomial degrees are raised in the area where the pulse is situated. The temporal refinement level was not raised in any of the time slabs.
\begin{figure}[h!]
\centering
\includegraphics[scale=0.3]{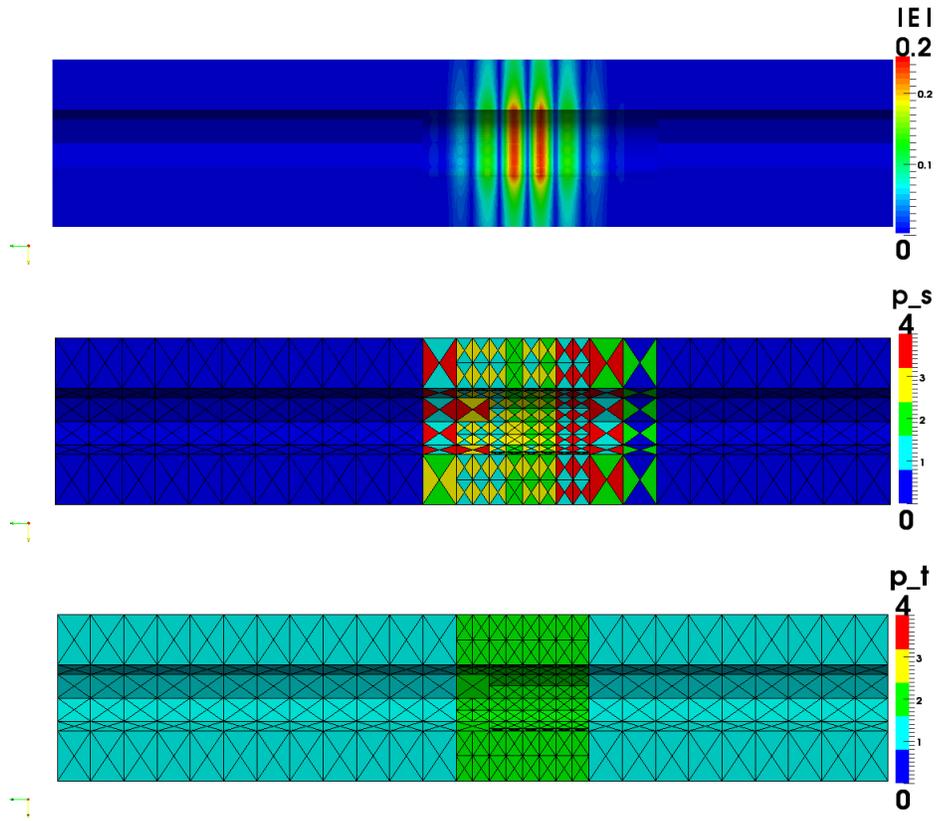}
\caption{Propagation of a broadband pulse in a cylindrical waveguide. From top to bottom: distribution of $|\b{E}_h|$, $(p_x,p_y,p_z)$, and $p_t$}
\label{fig:coaxial_waveguide_adaptive_hp:discretization_step86}
\end{figure}
For the $hp$-adaptive solution we have obtained a relative fine-grid error in the norm $\|\cdot\|_{L_2(;\bL{2}(\Omega))}$ of $9.6e-3$.

\subsubsection{Scattering of a dielectric sphere}
In the following, we consider the scattering of a Gaussian plane-wave by a
dielectric sphere with $\eps=4$. The problem again was solved using the $hp$-adaptive algorithm. 
For the initial discretization we have chosen a coarse
mesh consisting of 72 elements of degree $p_t=1, p_x=p_y=p_z=0$.
The discretization of time slabs 33, 65 and 124 (from top to bottom) 
is depicted in Fig. \ref{fig:scattering_sphere_dielectric_adaptive_hp:discretization_step34}. The fact that the electric field has limited regularity at the material interface is reflected by the choice of moderate polynomial degrees in space and linear polynomial degrees in time and a comparatively small spatial mesh size near the material interface. Refinement in the temporal direction was almost exclusively $h$-refinement, i.e. the temporal polynomial degree was almost exclusively equal to one. Thus we do not show the temporal polynomial degree distributions.\\
The bi-static RCS for $ka=1$ is depicted in Fig. \ref{fig:scattering_sphere_dielectric_adaptive_hp:rcs_ka1}, where a relative $\ell_2$-error  of $2.53 \cdot 10^{-2}$ was obtained.
\begin{figure}[H]
\centering
\includegraphics[scale=0.3]{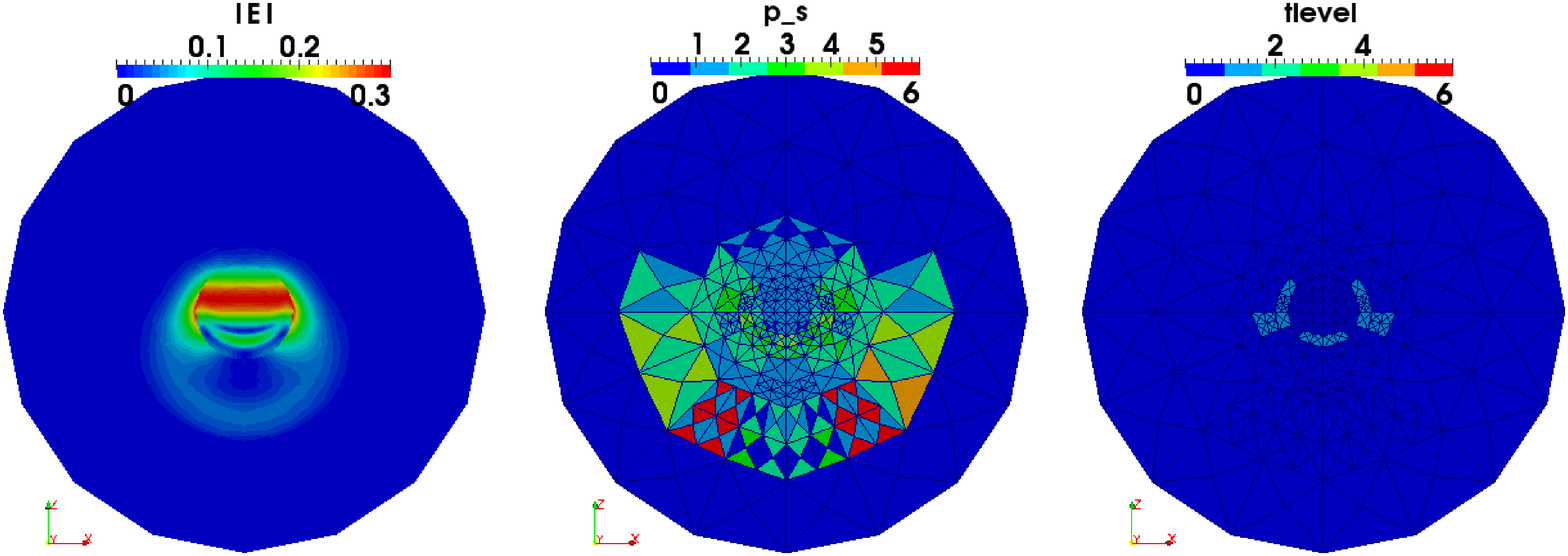}\\
\includegraphics[scale=0.3]{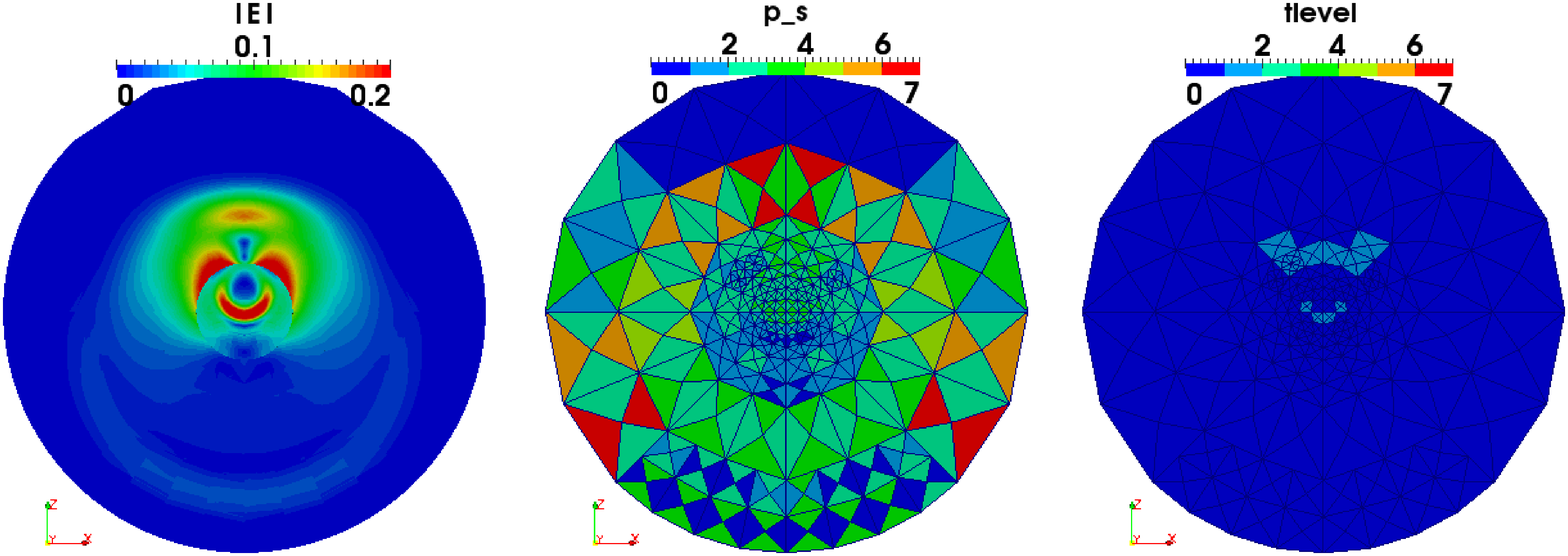}\\
\includegraphics[scale=0.3]{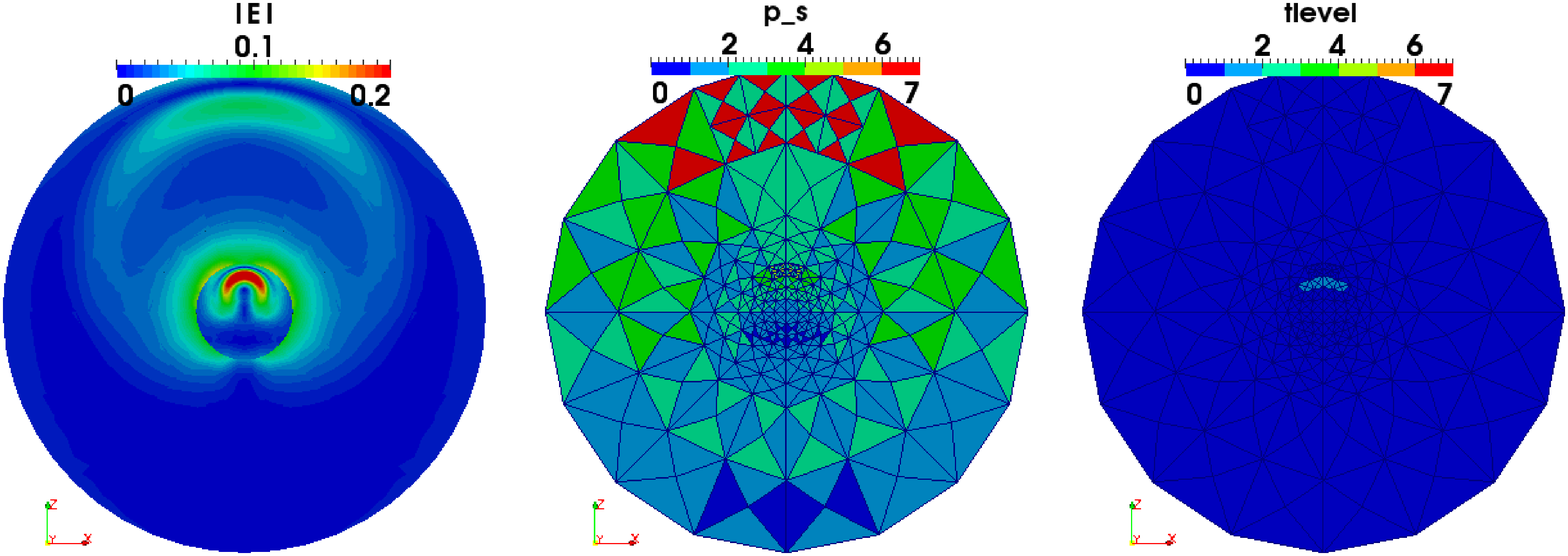}
\caption{$hp$-adaptive simulation of the scattering from a dielectric sphere. From left to right: distribution of $|\b{E}_h|$, spatial polynomial degrees $(p_x,p_y,p_z)$ and temporal refinement level at timeslab 34, 66, 125 (from  top to bottom)}
\label{fig:scattering_sphere_dielectric_adaptive_hp:discretization_step34}
\end{figure}
\begin{figure}[h!]
\centering
\includegraphics[scale=0.33]{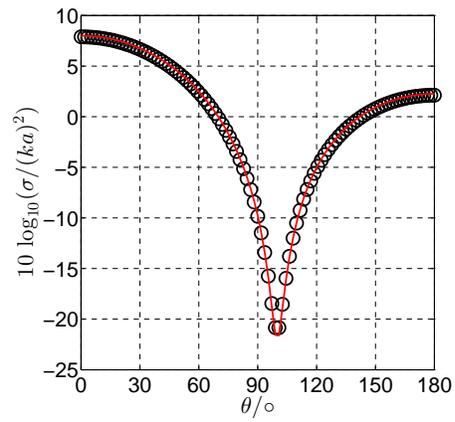}
\caption{Bistatic RCS of a dielectric sphere with $\eps=4$ and $ka$=1,
numerical solution (black circles), analytical Mie-series solution (red line)}
\label{fig:scattering_sphere_dielectric_adaptive_hp:rcs_ka1}
\end{figure}
\FloatBarrier
\section{Conclusions}
We have devised a space-time Galerkin method, which allows for local $hp$-refinement 
in space and time by treating the spatial part of the discretization with a DG approach, 
wheras the temporal part is treated with a continuous Galerkin approach. 
The resulting implicit method can be shown to be non-dissipative, as long as the 
spatial part of the discretization is kept constant from time-slab to time-slab. 
We have shown, that the method can be implemented, such that the complexity of the 
residual evaluation for an iterative solution is $\mathcal{O}(p^4)$ for affine elements 
and $\mathcal{O}(p^5)$ for non affine elements. Furthermore, for the case of no 
local refinement with respect to time, we have devised an \emph{a posteriori} 
bound on the iteration error. Thus a balancing of the iteration and discretization 
errors is possible, provided that an \emph{a posteriori} estimate for the 
discretization error is available. While the presented method has higher
computational costs than explicit hp-DG methods such as
\cite{schnepp_efficient_2011,schnepp_error_2014}, it provides
the possibility to apply hp-adaptivity not only in space but also in time. 
It therefore allows to control the approximation error in the entire space-time
domain of interest. We presented numerical experiments confirming that the
method can be used for fully space-time hp-adaptive simulations.\\
The \emph{a priori}
and \emph{a posteriori} error analysis is subject of ongoing work.
Other future work should relate to the investigation of more efficient methods
for the iterative solution of the linear systems, in particular regarding
preconditioning.
 	 \section*{References}
\bibliographystyle{elsarticle-num}
\bibliography{bibdatabase}

\begin{thebibliography}{10}
\expandafter\ifx\csname url\endcsname\relax
  \def\url#1{\texttt{#1}}\fi
\expandafter\ifx\csname urlprefix\endcsname\relax\def\urlprefix{URL }\fi
\expandafter\ifx\csname href\endcsname\relax
  \def\href#1#2{#2} \def\path#1{#1}\fi

\bibitem{vegt_space_2002}
J.~Van~der Vegt, H.~Van~der Ven, Space--time discontinuous galerkin finite
  element method with dynamic grid motion for inviscid compressible flows: I.
  general formulation, Journal of Computational Physics 182~(2) (2002)
  546--585.

\bibitem{schwab_hp_1999}
C.~Schwab, p-and hp-Finite Element Methods: Theory and Applications to Solid
  and Fluid Mechanics, Oxford University Press, {USA}, 1999.

\bibitem{hesthaven_nodal_2002}
J.~Hesthaven, T.~Warburton,
  \href{http://linkinghub.elsevier.com/retrieve/pii/S0021999102971184}{Nodal
  high-order methods on unstructured grids}, Journal of Computational Physics
  181~(1) (2002) 186--221.
\newblock \href {http://dx.doi.org/10.1006/jcph.2002.7118}
  {\path{doi:10.1006/jcph.2002.7118}}.
\newline\urlprefix\url{http://linkinghub.elsevier.com/retrieve/pii/S0021999102%
971184}

\bibitem{fezoui_convergence_2005}
L.~Fezoui, S.~Lanteri, S.~Lohrengel, S.~Piperno,
  \href{http://www.esaim-m2an.org/10.1051/m2an:2005049}{Convergence and
  stability of a discontinuous galerkin time-domain method for the {3D}
  heterogeneous maxwell equations on unstructured meshes}, {ESAIM:}
  Mathematical Modelling and Numerical Analysis 39~(6) (2005) 1149--1176.
\newblock \href {http://dx.doi.org/10.1051/m2an:2005049}
  {\path{doi:10.1051/m2an:2005049}}.
\newline\urlprefix\url{http://www.esaim-m2an.org/10.1051/m2an:2005049}

\bibitem{cohen_spatial_2006}
G.~Cohen, X.~Ferrieres, S.~Pernet,
  \href{http://linkinghub.elsevier.com/retrieve/pii/S0021999106000131}{A
  spatial high-order hexahedral discontinuous galerkin method to solve
  maxwell’s equations in time domain}, Journal of Computational Physics
  217~(2) (2006) 340--363.
\newblock \href {http://dx.doi.org/10.1016/j.jcp.2006.01.004}
  {\path{doi:10.1016/j.jcp.2006.01.004}}.
\newline\urlprefix\url{http://linkinghub.elsevier.com/retrieve/pii/S0021999106%
000131}

\bibitem{demkowicz_computing_2006}
L.~Demkowicz, Computing with Hp-adaptive Finite Elements: One and two
  dimensional elliptic and Maxwell problems, Vol.~1, {CRC} Press, 2006.

\bibitem{demkowicz_computing_2007}
L.~Demkowicz, J.~Kurtz, D.~Pardo, M.~Paszenski, W.~Rachowicz, A.~Zdunek,
  Computing with hp-{ADAPTIVE} {FINITE} {ELEMENTS:} Volume {II} Frontiers:
  Three Dimensional Elliptic and Maxwell Problems with Applications, Vol.~2,
  Chapman \& {Hall/CRC}, 2007.

\bibitem{piperno_symplectic_2006}
S.~Piperno, Symplectic local time-stepping in non-dissipative {DGTD} methods
  applied to wave propagation problems, {ESAIM:} Mathematical Modelling and
  Numerical Analysis 40~(05) (2006) 815–--841.

\bibitem{taube_high_2009}
A.~Taube, M.~Dumbser, C.-D. Munz, R.~Schneider, A high-order discontinuous
  galerkin method with time-accurate local time stepping for the maxwell
  equations, International Journal of Numerical Modelling: Electronic Networks,
  Devices and Fields 22~(1) (2009) 77--103.

\bibitem{dolean_locally_2010}
V.~Dolean, H.~Fahs, L.~Fezoui, S.~Lanteri,
  \href{http://linkinghub.elsevier.com/retrieve/pii/S0021999109005300}{Locally
  implicit discontinuous galerkin method for time domain electromagnetics},
  Journal of Computational Physics 229~(2) (2010) 512--526.
\newblock \href {http://dx.doi.org/10.1016/j.jcp.2009.09.038}
  {\path{doi:10.1016/j.jcp.2009.09.038}}.
\newline\urlprefix\url{http://linkinghub.elsevier.com/retrieve/pii/S0021999109%
005300}

\bibitem{descombes_locally_2012}
S.~Descombes, S.~Lanteri, L.~Moya,
  \href{http://dx.doi.org/10.1007/s10915-012-9669-5}{Locally implicit time
  integration strategies in a discontinuous galerkin method for maxwell’s
  equations}, Journal of Scientific Computing (2012) 1--29\href
  {http://dx.doi.org/10.1007/s10915-012-9669-5}
  {\path{doi:10.1007/s10915-012-9669-5}}.
\newline\urlprefix\url{http://dx.doi.org/10.1007/s10915-012-9669-5}

\bibitem{griesmaier_discretization_2013}
R.~Griesmaier, P.~Monk,
  \href{http://dx.doi.org/10.1007/s10915-013-9741-9}{Discretization of the wave
  equation using continuous elements in time and a hybridizable discontinuous
  galerkin method in space}, Journal of Scientific Computing (2013) 1--27\href
  {http://dx.doi.org/10.1007/s10915-013-9741-9}
  {\path{doi:10.1007/s10915-013-9741-9}}.
\newline\urlprefix\url{http://dx.doi.org/10.1007/s10915-013-9741-9}

\bibitem{remaki_methodes_1999}
M.~Remaki, Méthodes numériques pour les équations de maxwell instationnaires
  en milieu hétérogène, Ph.D. thesis, Ecole des Ponts {ParisTech} (1999).

\bibitem{jackson_classical_1998}
J.~Jackson, Classical electrodynamics, Classical Electrodynamics, 3rd Edition,
  by John David Jackson, pp. 832. {ISBN} 0-471-30932-X. Wiley-{VCH}, July 1998.
  1.

\bibitem{montseny_dissipative_2008}
E.~Montseny, S.~Pernet, X.~Ferriéres, G.~Cohen,
  \href{http://linkinghub.elsevier.com/retrieve/pii/S0021999108001836}{Dissipa%
tive terms and local time-stepping improvements in a spatial high order
  discontinuous galerkin scheme for the time-domain maxwell’s equations},
  Journal of Computational Physics 227~(14) (2008) 6795--6820.
\newblock \href {http://dx.doi.org/10.1016/j.jcp.2008.03.032}
  {\path{doi:10.1016/j.jcp.2008.03.032}}.
\newline\urlprefix\url{http://linkinghub.elsevier.com/retrieve/pii/S0021999108%
001836}

\bibitem{melenk_fully_2001}
J.~Melenk, K.~Gerdes, C.~Schwab, Fully discrete $hp$-finite elements: fast
  quadrature, Computer methods in applied mechanics and engineering 190~(32)
  (2001) 4339–--4364.

\bibitem{koutschan_computer_2011}
C.~Koutschan, C.~Lehrenfeld, J.~Sch\"oberl, Computer algebra meets finite
  elements: An efficient implementation for maxwell’s equations, Numerical
  and Symbolic Scientific Computing: Progress and Prospects 1 (2011) 105.

\bibitem{sarmany_dispersion_2007}
D.~Sármány, M.~Botchev, J.~J. van~der Vegt, Dispersion and dissipation error
  in high-order runge-kutta discontinuous galerkin discretisations of the
  maxwell equations, Journal of Scientific Computing 33~(1) (2007) 47–74.

\bibitem{verwer_composition_2012}
J.~G. Verwer,
  \href{http://epubs.siam.org/doi/abs/10.1137/100816122}{Composition methods,
  maxwell's equations, and source terms}, {SIAM} Journal on Numerical Analysis
  50~(2) (2012) 439--457.
\newblock \href {http://dx.doi.org/10.1137/100816122}
  {\path{doi:10.1137/100816122}}.
\newline\urlprefix\url{http://epubs.siam.org/doi/abs/10.1137/100816122}

\bibitem{akrivis_galerkin_2011}
G.~Akrivis, C.~Makridakis, R.~H. Nochetto, Galerkin and {Runge–Kutta}
  methods: unified formulation, a posteriori error estimates and nodal
  superconvergence, Numerische Mathematik 118~(3) (2011) 429--–456.

\bibitem{hesthaven_nodal_2007}
J.~S. Hesthaven, T.~Warburton, Nodal discontinuous Galerkin methods:
  algorithms, analysis, and applications, Vol.~54, Springer, 2007.

\bibitem{solin_higher-order_2003}
P.~Solin, K.~Segeth, I.~Dolezel, Higher-order finite element methods, Vol.~41,
  Chapman and {Hall/CRC}, 2003.

\bibitem{dorfler_convergent_1996}
W.~D\"orfler, A convergent adaptive algorithm for poisson's equation, {SIAM}
  Journal on Numerical Analysis 33~(3) (1996) 1106–--1124.

\bibitem{schnepp_efficient_2011}
S.~Schnepp, T.~Weiland, Efficient large scale electromagnetic simulations using
  dynamically adapted meshes with the discontinuous galerkin method, Journal of
  Computational and Applied Mathematics (2012) 4909–--4924.

\bibitem{schnepp_error_2014}
S.~M. Schnepp,
  \href{http://www.sciencedirect.com/science/article/pii/S0377042713007115}{Er%
ror-driven dynamical hp-meshes for the discontinuous galerkin method in
  time-domain}, Journal of Computational and Applied Mathematics 270 (2014)
  353--368.
\newblock \href {http://dx.doi.org/10.1016/j.cam.2013.12.038}
  {\path{doi:10.1016/j.cam.2013.12.038}}.
\newline\urlprefix\url{http://www.sciencedirect.com/science/article/pii/S03770%
42713007115}

\end{thebibliography}
\section*{Acknowledgements}
The work of M. Lilienthal is supported by the 'Excellence Initiative' of the German Federal and State Governments and the Graduate School of Computational Engineering at Technische Universit\"at Darmstadt and the DFG under grant no. DFG WE 1239/27-2.\\
S.M. Schnepp acknowledges the support of the Alexander von Humboldt Foundation through a Feodor Lynen-Research Fellowship.\\
We thank Jens Niegemann for providing code for the computation of the Mie-series
solutions.\\
We would like to thank Herbert Egger for giving valuable hints, which improved the
presentation of the material significantly.
\end{document}